\numberwithin{equation}{section}
\numberwithin{equation}{section}
\newtheorem{assumption}{Assumption}[section]
\newtheorem{theorem}{Theorem}[section]
\newtheorem{lemma}[theorem]{Lemma}
\newtheorem{corollary}[theorem]{Corollary}
\newtheorem{definition}[theorem]{Definition}
\newtheorem{proposition}[theorem]{Proposition}
\newtheorem{example}{Example}[section]
\DeclareMathOperator*{\argmin}{arg\,min}
\def\mF{{\mathcal F}}
\def\mP{{\mathcal P}}
\def\mB{{\mathcal B}}
\def\mE{{\mathcal E}}
\def\mM{{\mathcal M}}
\def\mI{{\mathcal I}}
\def\mH{{\mathcal H}}
\def\mS{{\mathcal S}}
\def\mG{{\mathcal G}}
\def\mN{{\mathcal N}}
\def\relu{\mathrm{ReLU}}
\def\sp{\mathrm{SP}}
\def\R{{\mathbb R}}
\def\mZ{{\mathcal Z}}
\def\N{{\mathbb N}}
\def\bP{{\mathbf P}}
\def\bE{{\mathbf E}}
\def\gt{{\rightarrow}}
\def\eps{{\varepsilon}}
 \def\1{{\mathbf 1}}
\newcommand{\abs}[1]{\lvert#1\rvert}
\newcommand{\norm}[1]{\lVert#1\rVert}
\newcommand{\average}[1]{\langle#1\rangle}
\begin{document}

\title[Neural Networks for Schr\"odinger Eigenvalue Problem]{A Priori Generalization Error Analysis of Two-Layer Neural Networks for Solving High Dimensional Schr\"odinger 
Eigenvalue Problems}

\author{Jianfeng Lu}
\address{(JL) Departments of Mathematics, Physics, and Chemistry, Duke University, Box 90320, Durham, NC 27708.}
\email{jianfeng@math.duke.edu}
\author{Yulong Lu}
\address{(YL) Department of Mathematics and Statistics, Lederle Graduate Research Tower, University of Massachusetts, 710 N. Pleasant Street, Amherst, MA 01003.} 
\email{lu@math.umass.edu} 

\thanks{J.L.~is supported in part by National Science Foundation via grants DMS-2012286 and CCF-1934964. Y.L.~is supported by the start-up fund of the Department of Mathematics and Statistics at UMass Amherst.}


\begin{abstract}
This paper analyzes the  generalization error of two-layer neural networks for computing the ground state of the Schr\"odinger operator on a $d$-dimensional hypercube. We prove that the convergence rate of the generalization error is independent of the dimension $d$, under the a priori assumption that the ground state lies in a spectral Barron space. We verify such assumption by proving a new regularity estimate for the ground state in the spectral Barron space. The later is achieved by a fixed point argument based on the Krein-Rutman theorem. 
\end{abstract}

\maketitle

\section{Introduction}

High dimensional partial differential equations (PDEs) arise ubiquitously from scientific and engineering problems which involve many degrees of freedom, examples include many-body quantum mechanics, phase space description of chemical dynamics, learning and control of complex systems, spectral methods for high dimensional data, just to name a few. While numerical methods for partial differential equations in low-dimension are quite standard, the numerical solution to high dimensional PDEs has remained an outstanding challenge due to the well-known curse of dimensionality. Namely, the computational cost can grow exponentially as the dimension increases.  Perhaps the most celebrated and important example of such challenge is to determine the ground state of many-body quantum systems, which amounts to solving  eigenvalue problems for high dimensional PDEs. 

In recent years, the artificial neural-networks have shown great success in representing high-dimensional classifiers or probability distributions in a variety of machine learning tasks and have led to the tremendous success and popularity of deep learning \cite{lecun2015deep, schmidhuber2015deep}. Motivated by those recent success, researchers have been actively exploring using deep learning techniques to solve  high dimensional PDEs \cite{weinan2018deep,raissi2019physics,han2018solving,sirignano2018dgm,zang2020weak,gu2020selectnet,chen2020friedrichs} by parametrizing the solution as a neural network, including eigenvalue problems for many-body quantum systems (see e.g., \cite{carleo2017solving, gao2017efficient, cai2018approximating, han2019solving, hermann2020deep, pfau2020ab, choo2020fermionic, han2020solving}).
Despite wide popularity and many successful examples of employing neural network ansatz for solving PDEs, their theoretical analyses  are still sparse. For the recent theoretical development, we refer the interested readers to  \cite{shin2020convergence,shin2020error, mishra2020estimates,hong2021rademacher,xu2020finite,luo2020two}. Nonetheless, to the best of our knowledge, the numerical analysis of neural network methods for high dimensional eigenvalue problems is not yet established. The goal of this paper is to provide an \emph{a priori} generalization analysis of variational methods for  computing the ground state of the Schr\"odinger operator in high dimension based on the two-layer neural network ansatz. 

Our generalization error analysis  follows largely the framework established in our previous work \cite{lu2021priori}, where the a priori generalization error is analyzed for deep Ritz method for solving elliptic equations. In particular, to establish approximation results that do not deteriorate as dimension increases, we will work in the spectral Barron space \cite{barron1993universal,siegel2020high,ma2020towards,lu2021priori}. In fact, it has been shown in those works that Barron functions has  ``lower complexity" than more familiar regularity-based functions such as Sobolev or H\"older functions in the sense that the former  can be efficiently approximated by two-layer neural networks without curse of dimensionality. On the other hand, as the Barron space is rather different from  Sobolev or H\"older spaces, the main challenge one faces is to establish regularity theory for high dimensional PDEs in such space. Our previous work \cite{lu2021priori} established the appropriate solution theory for elliptic equations. The key contribution of the present work is to extend such novel solution theory to Schr\"odinger eigenvalue problems in high dimension. Since we are working in spectral Barron space, which is a general Banach space without inner product structure, Lax-Milgram or Courant-Fisher theorems are not applicable, and thus we have to rely on fixed point theorem to establish the existence of solutions. In our previous work for elliptic PDEs \cite{lu2021priori}, Fredholm alternative principle was used; while in this work, to establish the existence of nontrivial eigenfunctions, we rely on the Krein-Rutman theorem \cite{krein1962linear}. 

The remainder of this paper is organized as follows. In Section \ref{sec:main} we first set up the ground state problem of the Schr\"odinger operator and present the main generalization results (see Theorems \ref{thm:gen1}-\ref{thm:gen2}) and the new regularity estimate on the ground state in the spectral Barron space (see Theorem \ref{thm:reg}). In Section  \ref{sec:stab} we prove a key stability estimate on the ground state, which allows us to bound the $H^1$-error between the ground state and its approximation in terms of the energy excess. We present the proof of the main generalization result in Section \ref{sec:proofmain} and the proof of the new  regularity estimate on the ground state in Section \ref{sec:reg}.

\section{Set-Up and Main Results}\label{sec:main}
\subsection{Set-Up of Problem}
Let $\Omega  = [0,1]^d$ be the unit hypercube on $\R^d$ with the boundary $\partial \Omega$.  Consider the Neumann eigenvalue problem for the Schr\"odinger operator 
$$
\begin{aligned}
   & \mH u = -\Delta u + V u=  \lambda u & \text{ in } \Omega, \\
 &  \frac{\partial u}{\partial \nu} = 0 & \text{ on } \partial\Omega.
\end{aligned}
$$
where  $\mH:= -\Delta + V $ is the Schr\"odinger operator with the potential function $V$ equipped with the Neumann boundary condition.  We are particularly interested in computing the ground state of $\mH$, that is the eigenfunction associated to the smallest eigenvalue of $\mH$. 

Throughout the paper we make the following minimum assumption on the potential function. 
\begin{assumption}\label{ass:v}
There exist finite positive constants $V_{\min} $ and $ V_{\max}$ such that  $ V_{\min}\leq V(x) \leq V_{\max}$ for every $x\in \Omega$. 
\end{assumption}
Note that we do not lose generality by assumption that $V_{\min}$ is positive, since one can always add a constant to $V$ without changing the eigenfunctions. 

 It is well-known that the minimum eigenvalue $\lambda_0$ can be characterized as the minimum of Rayleigh quotient, i.e.
\begin{equation}\label{eq:lambda0}
    \lambda_0 = \min_{u\in H^1(\Omega)} \mE(u) :=  \min_{u\in H^1(\Omega)}   \frac{\langle u, \mH u\rangle}{ \langle u,u\rangle}, 
\end{equation}
where $\langle\cdot, \cdot\rangle$ denotes the inner product on $L^2(\Omega)$. Under Assumption \ref{ass:v},  the minimizer of the variational problem \eqref{eq:lambda0} is achieved at the  ground state $u_0\in H^1(\Omega)$. Moreover, the ground state $u_0$ is unique (up to a multiplicative constant) and is strictly positive (up to a global sign) on $\Omega$; see e.g., \cite[Theorem 3.3.2]{glimm1987functional}. Without loss of generality we assume further that the ground state $u_0$ is normalized, i.e., $\|u_0\|_{L^2(\Omega)} =1$.   

For certain results to hold, we may also need to make the following additional spectrum assumption on the Schr\"odinger operator $\mH$.

\begin{assumption}\label{ass:spect}
The operator  $\mH$ has discrete spectrum $\{\lambda_j\}_{j=0}^\infty$ with a positive spectral gap, i.e.  $ \lambda_0 < \lambda_1 \leq \lambda_2\leq \cdots \leq \lambda_k \uparrow \infty $.
\end{assumption}
To  avoid any confusion with the subscript in the notation, let us denote the ground state eigenpair by $(\lambda^\ast, u^\ast) := (\lambda_0, u_0)$, which our study focuses on. 

The natural idea is to seek an approximate solution to Problem \eqref{eq:lambda0} within some  hypothesis class $\mF \subset H^1(\Omega)$ that are  parameterized by neural networks. In practice, the Monte-Carlo method is employed to compute the high dimensional integrals defined by the inner products in \eqref{eq:lambda0}, leading to the definition of empirical loss (or risk) minimization. More concretely, let  us denote by $\mP_\Omega$  the uniform probability distribution on the domain $\Omega$. Then the population loss  $\mE$ can be written as 
\begin{equation}\label{eq:e}
\mE(u)  = \frac{\mE_V(u)}{\mE_2(u)} := \frac{\bE_{X\sim \mP_\Omega}\Big[|\nabla u(X)|^2 + V(X)|u(X)|^2\Big]}{\bE_{X\sim \mP_\Omega}\Big[ |u(X)|^2\Big]}. 
\end{equation}
Let $\{X_j\}_{j=1}^n$ be a sequence of random variables that are  independent and identically distributed (i.i.d.)~  according to $\mP_\Omega$. The population loss is approximated by the following  empirical loss
\begin{equation}\label{eq:en}
\mE_n(u)  = \frac{\mE_{n,V}(u)}{\mE_{n,2}(u)} ,
\end{equation}
where $\mE_{n,V}$ and $\mE_{n,2}$ are defined by 
$$\begin{aligned}
   \mE_{n,V} & := \frac{1}{n}\sum_{j=1}^n \big(|\nabla u(X_j)|^2 + V(X_j)|u(X_j)|^2\big)\\
   \mE_{n,2} & := \frac{1}{n}\sum_{j=1}^n |u(X_j)|^2.
\end{aligned}
$$
Note that we have used the fact that $|\Omega|=1$ in deriving the Monte-Carlo approximation above. 
Let $u_n$ be a minimizer of $\mE_n$ within $\mF$, i.e., $u_n = \argmin_{u\in \mF} \mE_n(u)$. Again since  $\mE_n(u)$ is scaling-invariant, we may assume that $\|u_n\|_{L^2} = 1$.  Our goal is to obtain quantitative estimates for the  error between $u_n$ and $u^\ast$, following the statistical learning literature, we will call such error the generalization error. 

We are interested in quantifying the error between $u_n$ and $u^\ast$ in terms of two quantities. The first one is given by the energy excess $\mE(u_n) - \mE(u^\ast)$ that quantifies the approximation of $\mE(u_n^m)$ to the leading eigenvalue $\lambda^\ast = \mE(u^\ast)$. 

To introduce the second quantity for measuring the error, we define the  projection operator $P$ onto the space of ground state by setting
$$
P u = \langle u, u^\ast \rangle u^\ast.
$$
Let us also define the operator  $P^\perp = (I - P)$, i.e. 
$$
P^\perp u = u - \langle u, u^\ast \rangle u^\ast
$$
Notice that if both $u_0$ and $u$ are normalized, then 
$$
\|P^\perp u \|_{L^2}^2  = 1 - |\langle u, u^\ast \rangle|^2.
$$
Therefore  $\|P^\perp u_n\|_{L^2(\Omega)}$ quantifies the offset of the direction of $u_n$  from that of $u^\ast$. 
The following theorem shows that the $H^1$-norm of $P^\perp u $ can be bounded above by the energy excess. 
\begin{proposition}\label{prop:stab}
Assume that $\mH$ satisfies Assumptions \ref{ass:v} and \ref{ass:spect}. Then for any  $u\in H^1(\Omega)$,
\begin{align*}
& \|P^\perp u\|_{L^2(\Omega)}^2 \leq \frac{\mE(u) - \mE(u^\ast)}{\lambda_1 -\lambda^\ast}  \|u\|_{L^2(\Omega)}^2, \\
& \norm{\nabla P^\perp u}_{L^2(\Omega)}^2 \leq  \bigl(\mE(u) - \mE(u^\ast)\bigr) \Bigl(\frac{V_{\max} - V_{\min}}{\lambda_1 - \lambda^\ast}  + 1\Bigr)  \norm{u}_{L^2(\Omega)}^2. 
\end{align*}
In particular, if $\|u\|_{L^2} =1$, then  
$$
\|P^\perp u\|_{H^1(\Omega)}^2 \leq \Bigl(\frac{V_{\max} - V_{\min} + 1}{\lambda_1 - \lambda^\ast}  + 1\Bigr) \bigl(\mE(u) - \mE(u^\ast)\bigr). 
$$
\end{proposition}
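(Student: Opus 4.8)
The plan is to expand $u$ in an orthonormal basis of eigenfunctions of $\mH$ and compare the Rayleigh quotient of $u$ with that of $u^\ast$ mode by mode. Write $u = \sum_{j\ge 0} c_j u_j$ where $\{u_j\}_{j\ge0}$ are the $L^2$-normalized eigenfunctions with eigenvalues $\lambda_j$ (using Assumption \ref{ass:spect} to guarantee such a discrete spectral decomposition); then $\|u\|_{L^2}^2 = \sum_j c_j^2$, $\langle u,\mH u\rangle = \sum_j \lambda_j c_j^2$, and $\|P^\perp u\|_{L^2}^2 = \sum_{j\ge1} c_j^2$. Since $\mE(u^\ast) = \lambda^\ast = \lambda_0$, one computes
\[
\mE_V(u) - \lambda^\ast \mE_2(u) = \langle u,\mH u\rangle - \lambda_0\|u\|_{L^2}^2 = \sum_{j\ge1}(\lambda_j - \lambda_0) c_j^2 \ge (\lambda_1 - \lambda^\ast)\sum_{j\ge1} c_j^2 = (\lambda_1-\lambda^\ast)\|P^\perp u\|_{L^2}^2,
\]
using $\lambda_j \ge \lambda_1$ for $j\ge1$. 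Dividing by $\mE_2(u) = \|u\|_{L^2}^2$ gives $\mE(u) - \mE(u^\ast) \ge (\lambda_1 - \lambda^\ast)\|P^\perp u\|_{L^2}^2 / \|u\|_{L^2}^2$, which is the first inequality.

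For the gradient bound, the idea is to relate $\|\nabla P^\perp u\|_{L^2}^2$ to the Dirichlet form applied to $P^\perp u$. Since $P^\perp u = \sum_{j\ge1} c_j u_j$ is a combination of eigenfunctions orthogonal to $u^\ast$, integrating by parts gives $\langle P^\perp u, \mH P^\perp u\rangle = \sum_{j\ge1}\lambda_j c_j^2 = \|\nabla P^\perp u\|_{L^2}^2 + \langle V P^\perp u, P^\perp u\rangle$. Hence
\[
\|\nabla P^\perp u\|_{L^2}^2 = \sum_{j\ge1}\lambda_j c_j^2 - \langle V P^\perp u, P^\perp u\rangle \le \sum_{j\ge1}\lambda_j c_j^2 - V_{\min}\|P^\perp u\|_{L^2}^2,
\]
and we want to bound $\sum_{j\ge1}\lambda_j c_j^2$ from above. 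Here I would write $\sum_{j\ge1}\lambda_j c_j^2 = \sum_{j\ge0}\lambda_j c_j^2 - \lambda_0 c_0^2 = \langle u,\mH u\rangle - \lambda_0 c_0^2$ and then express $\langle u, \mH u\rangle = \mE(u)\|u\|_{L^2}^2$ and $c_0^2 = \|u\|_{L^2}^2 - \|P^\perp u\|_{L^2}^2$; combining these with the upper bound $\langle V P^\perp u, P^\perp u\rangle \le V_{\max}\|P^\perp u\|_{L^2}^2$ one should reach, after regrouping,
\[
\|\nabla P^\perp u\|_{L^2}^2 \le \bigl(\mE(u) - \lambda^\ast\bigr)\|u\|_{L^2}^2 + (V_{\max}-V_{\min})\|P^\perp u\|_{L^2}^2 \le \bigl(\mE(u) - \mE(u^\ast)\bigr)\Bigl(1 + \tfrac{V_{\max}-V_{\min}}{\lambda_1-\lambda^\ast}\Bigr)\|u\|_{L^2}^2,
\]
where the last step substitutes the first inequality of the proposition. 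Adding the $L^2$ and gradient bounds and setting $\|u\|_{L^2}=1$ yields the final $H^1$ estimate with the stated constant.

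The main obstacle, and the step to handle carefully, is the bookkeeping in the gradient estimate: one must correctly account for the $j=0$ term when passing between $\langle u,\mH u\rangle$ and $\sum_{j\ge1}\lambda_j c_j^2$, and verify that the cross-term involving $\lambda_0 c_0^2$ combines favorably (it does, because $\lambda_0 c_0^2 = \lambda_0\|u\|_{L^2}^2 - \lambda_0\|P^\perp u\|_{L^2}^2$ and $\lambda_0 \ge 0$ under Assumption \ref{ass:v}, so dropping $-\lambda_0\|P^\perp u\|_{L^2}^2$ only helps). A secondary point is justifying the spectral expansion and the integration-by-parts identity $\langle v,\mH v\rangle = \|\nabla v\|_{L^2}^2 + \langle Vv,v\rangle$ for $v\in H^1(\Omega)$ under the Neumann boundary condition, which is standard but should be invoked explicitly.
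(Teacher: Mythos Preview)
Your approach is essentially the paper's: decompose $u$ orthogonally along $u^\ast$ and its complement, use the spectral gap to get the $L^2$ bound, then express $\|\nabla P^\perp u\|^2$ through $\langle P^\perp u,\mH P^\perp u\rangle$ and reduce to the $L^2$ estimate. The paper does not write out the full eigenfunction expansion, using instead only $\langle u_\perp,\mH u_\perp\rangle \ge \lambda_1\|u_\perp\|^2$, but this is cosmetic.

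There is, however, a slip exactly at the step you flag as delicate. After substituting $c_0^2=\|u\|^2-\|P^\perp u\|^2$ into $\sum_{j\ge 1}\lambda_j c_j^2 - V_{\min}\|P^\perp u\|^2$ you get
\[
\|\nabla P^\perp u\|^2 \le (\mE(u)-\lambda^\ast)\|u\|^2 + (\lambda^\ast - V_{\min})\|P^\perp u\|^2,
\]
and the $\lambda^\ast\|P^\perp u\|^2$ contribution carries a \emph{positive} sign, so it cannot be ``dropped'' and invoking $\lambda^\ast\ge 0$ points the wrong way. What is actually needed is $\lambda^\ast \le V_{\max}$ (test the Rayleigh quotient on the constant function), which gives $(\lambda^\ast - V_{\min})\le (V_{\max}-V_{\min})$ and hence your claimed bound. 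The paper reaches the same place slightly more cleanly by writing $\|\nabla u_\perp\|^2 = \langle u_\perp,(\mH-\lambda^\ast)u_\perp\rangle - \langle u_\perp,(V-\lambda^\ast)u_\perp\rangle$ and bounding the second term by $(V_{\max}-V_{\min})\|u_\perp\|^2$. Note also that the upper bound $\langle V P^\perp u,P^\perp u\rangle \le V_{\max}\|P^\perp u\|^2$ you mention plays no role; only the lower bound $\ge V_{\min}\|P^\perp u\|^2$ is used.
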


\subsection{Main Results}
We aim to prove quantitative generalization error estimates between the approximation ground state $u_n$ parametrized by neural networks and the exact ground state $u^\ast$. Our particular interest is to show that under certain circumstances the generalization error of the neural network solution does not suffer from the curse of dimensionality. To this end, we will assume (and  prove below) that the exact ground state $u^\ast$ lies in a smaller function space than the usual Sobolev space within which the functions can be approximated by neural networks without curse of dimensionality. Specifically, we assume that $u^\ast$ belongs to  the {\em spectral Barron space} \cite{lu2021priori} defined as follows. 

Let us first define the set of cosine functions
$$
\mathscr{C} = \Bigl\{ \Phi_k \Bigr\}_{k\in \N_0^d}:=  \Bigl\{ \prod_{i=1}^d \cos(\pi k_i x_i) \ |\ k_i \in \N_0 \Bigr\}.
$$
Let $\{\hat{u}(k)\}_{k\in \N_0^d}$ be the expansion coefficients of a function $u\in L^1(\Omega)$ under the basis $\{\Phi_k\}_{k\in \N_0^d}$.
 For $s\geq 0$, the spectral Barron space $\mB^s(\Omega)$ on $\Omega$ is defined  by 
\begin{equation}\label{eq:barrons}
    \mB^s(\Omega) := \Big\{u\in L^1(\Omega): \sum_{k\in \N^d_0} (1 + \pi^{s}|k|_1^{s}) |\hat{u}(k)| < \infty \Big\},
\end{equation}
which is equipped with the spectral Barron norm 
$$
\|u\|_{\mB^s(\Omega)}  = \sum_{k\in \N^d_0} (1 + \pi^{s}|k|_1^{s}) |\hat{u}(k)|.
$$
Note that we use $|k|_1$ to denote the $\ell^1$-norm of a vector $k$. 
It is clear that $B^s(\Omega)$ is a Banach space. Moreover, since functions in $B^s(\Omega)$ have summable cosine coefficients. we have $B^s(\Omega) \hookrightarrow C(\overline{\Omega})$. When $s=2$, we adopt the short notation $\mB(\Omega)$ for $\mB^2(\Omega)$.  Our notion of spectral Barron space is an adaption of the Barron space defined in the seminal work \cite{barron1993universal}; see also the recent works \cite{bach2017breaking,klusowski2018approximation,e2019barron,siegel2020approximation} on other variants of Barron spaces. The original Barron function $f$ in \cite{barron1993universal} is defined on the whole space $\R^d$ whose Fourier transform $\hat{f}(w)$ satisfies that $\int |\hat{f}(\omega)| |\omega| d\omega < \infty$. Our spectral Barron space $\mB^s(\Omega)$ with $s=1$, defined on the bounded domain $\Omega$, can be viewed as a finite domain analog of the original  Barron space from  \cite{barron1993universal}. 

Functions in the spectral Barron space differ substantially from those in Sobolev or H\"older spaces; most importantly, they can be approximated with respect to the $H^1$-norm by two-layer neural networks without curse of dimensionality. To make this more precise, we recall an approximation result from \cite{lu2021priori}. Let us define  for an activation function $\phi$, a constant $B>0$ and the number of hidden neurons $m$ the set of functions
 \begin{equation}\label{eq:fphim}
\mF_{\phi, m}(B) := \Big\{c + \sum_{i=1}^m \gamma_i \phi(w_i \cdot x  - t_i), |c|\leq 2B, |w_i|_1=1, |t_i|\leq 1, \sum_{i=1}^m |\gamma_i|\leq 4B\Big\}.
\end{equation} 
We will focus on the rescaled Softplus activation function $$
\sp_{\tau} (z) =  \frac{1}{\tau} \sp(\tau z) = \frac{1}{\tau}\ln(1 + e^{\tau z}),$$
where $\tau>0$ is a rescaling parameter. Observe that $\sp_{\tau} \gt \relu$ pointwisely as $\tau \gt 0$ (see \cite[Lemma 4.6]{lu2021priori}).

Let $\mF_{\sp_{\tau},m}(B)$ be the set of neural networks defined by setting $\phi = \sp_\tau$ in \eqref{eq:fphim}. The following lemma shows that functions in $\mB(\Omega)$ can be well approximated by functions in $\mF_{\sp_{\tau},m}(B)$  without curse of dimensionality. 
\begin{lemma}\label{lem:appro} \cite[Theorem 2.2]{lu2021priori}
For any $u\in \mB(\Omega)$, there exists a two-layer neural network $u_m \in \mF_{\sp_{\tau},m}(\|u\|_{\mB(\Omega)})$ with $\tau  = \sqrt{m}$,
such that 
$$
\|u - u_m\|_{H^1(\Omega)} \leq  \frac{\|u\|_{\mB(\Omega)}(6\log m+30)}{\sqrt{m}}.
$$
\end{lemma}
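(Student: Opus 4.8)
\medskip
\emph{Proof idea.} The plan is to follow the classical three-step route for Barron-type approximation, adapted to the rescaled Softplus activation and to the $H^1$-norm: first turn the cosine expansion of $u$ into an exact ``infinitely wide'' integral representation as a superposition of ridge functions $x\mapsto\sp_\tau(w\cdot x-t)$ against a probability measure; then discretize this representation by a Monte-Carlo (Maurey) empirical-sampling argument to produce an $m$-neuron network lying in the class \eqref{eq:fphim}; and finally choose $\tau=\sqrt m$ to balance the error coming from the finite rescaling of $\sp_\tau$ against the $1/\sqrt m$ sampling error.

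For the first step I would write $u=\sum_{k\in\N_0^d}\hat u(k)\Phi_k$ and decompose each basis function by the product-to-sum formula, $\Phi_k(x)=2^{-(d-1)}\sum_{\varepsilon}\cos\!\bigl(\pi(\varepsilon\odot k)\cdot x\bigr)$ (sum over the $2^{d-1}$ sign patterns $\varepsilon$), so that $u$ becomes a superposition of pure cosine ridges $\cos(\pi\widetilde k\cdot x)=g(v\cdot x)$ with $g(z)=\cos(\pi|\widetilde k|_1 z)$, $v=\widetilde k/|\widetilde k|_1$, $|v|_1=1$, and $v\cdot x\in[-1,1]$ on $\Omega$. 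The Taylor identity with integral remainder, symmetrized so as to hold on $[-1,1]$, namely $g(z)=g(0)+g'(0)z+\int_{-1}^{1}\rho(z,b)\,g''(b)\,db$ with $\rho(z,b)$ equal to $\relu$ of an affine function of $z$ with offset $b$, together with $g'(0)=0$ (so that no linear term is produced), rewrites each $\cos(\pi\widetilde k\cdot x)$ as the constant $1$ plus an integral over $b\in[-1,1]$ of ridge functions $\relu(w\cdot x-t)$, $|w|_1=1$, $|t|\le1$, with density $-\pi^2|\widetilde k|_1^2\cos(\pi|\widetilde k|_1 b)$. The weight $\pi^2|\widetilde k|_1^2$ appearing here is exactly the one built into the $\mB=\mB^2$ norm, so summing against $\{\hat u(k)\}$ yields $u=c+\int\relu(w\cdot x-t)\,\nu(dw,dt)$ for a finite measure $\nu$ on $\{|w|_1=1\}\times[-1,1]$ with $|c|=\bigl|\sum_k\hat u(k)\bigr|\le\|u\|_{\mB(\Omega)}$ and mass $\|\nu\|\le 2\sum_k\pi^2|k|_1^2|\hat u(k)|\le 2\|u\|_{\mB(\Omega)}$. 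One then passes from $\relu$ to $\sp_\tau$ using that $\sp_\tau''$ is an approximate identity of width $\sim1/\tau$, which lets the smoothing be transferred onto the smooth density, producing a $\sp_\tau$-representation of the same form up to an $H^1$-error of order $\|u\|_{\mB(\Omega)}/\tau$; any residual linear correction $\sum_j x_j$ that arises is itself a bounded-mass $\sp_\tau$-ridge combination on $\Omega$ since $x_j=\relu(x_j)$ there.

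For the discretization, write $\nu$ in polar form as $\Lambda\,\varepsilon\,\mu$ with $\mu$ a probability measure, $\varepsilon(\cdot)\in\{\pm1\}$, $\Lambda=\|\nu\|\le 2\|u\|_{\mB(\Omega)}$; sample $(W_1,T_1),\dots,(W_m,T_m)$ i.i.d.\ from $\mu$ and set $u_m=c+\frac{\Lambda}{m}\sum_{i=1}^m\varepsilon(W_i,T_i)\,\sp_\tau(W_i\cdot x-T_i)$. Since $H^1(\Omega)$ is a Hilbert space, the variance estimate
$$
\E\,\|u-u_m\|_{H^1(\Omega)}^2 \le \frac{\Lambda^2}{m}\,\sup_{|w|_1=1,\ |t|\le1}\bigl\|\sp_\tau(w\cdot x-t)\bigr\|_{H^1(\Omega)}^2
$$
holds, and because $|w|_1=1$, $|t|\le1$ give $|w\cdot x-t|\le2$ while $\sp_\tau$ grows at most linearly with derivative bounded by $1$, the supremum is bounded by an absolute constant uniformly in $\tau$. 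Thus some realization satisfies $\|u-u_m\|_{H^1(\Omega)}\lesssim\|u\|_{\mB(\Omega)}/\sqrt m$, and every realization has constant term bounded by $2\|u\|_{\mB(\Omega)}$ and outer weights $\gamma_i=\Lambda\varepsilon(W_i,T_i)/m$ with $\sum_i|\gamma_i|=\Lambda\le 4\|u\|_{\mB(\Omega)}$, so, after accounting for the $O(d)$ units used for the linear correction, $u_m\in\mF_{\sp_\tau,m}(\|u\|_{\mB(\Omega)})$ as in \eqref{eq:fphim}. Combined with the first step, the total $H^1$-error is $O\bigl(\|u\|_{\mB(\Omega)}(1/\tau+1/\sqrt m)\bigr)$, minimized at $\tau=\sqrt m$; carefully bounding the logarithmic losses from truncating the cosine modes at $|k|_1\sim\sqrt m$ and from estimating the mollification and $\sp_\tau$-tail contributions yields the stated $\|u-u_m\|_{H^1(\Omega)}\le\|u\|_{\mB(\Omega)}(6\log m+30)/\sqrt m$.

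I expect the main obstacle to be control in the full $H^1$-norm rather than merely in $L^2$. Approximation in $L^2$ alone is routine, but the gradient of a $\relu$-ridge is a discontinuous Heaviside ridge, and its smooth surrogate $\sp_\tau'(w\cdot x-t)$ disagrees with it on a slab of width $\sim1/\tau$ about the hyperplane $w\cdot x=t$, on which the marginal law of $w\cdot x$ under $\mP_\Omega$ (for $|w|_1=1$) can be as concentrated as $O(\sqrt d)$; a crude ``replace $\relu$ by $\sp_\tau$ in the completed network'' estimate therefore loses a factor $\sim\sqrt{d/\tau}$ per neuron and gives only an $O(m^{-1/4})$, dimension-dependent rate for the gradient. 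The way around this is to keep the representation in $\sp_\tau$-form throughout and use the mollifier identity $\partial_z\!\int\sp_\tau(z-b)\,g(b)\,db=(\sp_\tau''*g)(z)+(\text{exponentially small boundary terms})$, which shifts the smoothing onto the smooth, $H^1$-bounded density $g$ rather than onto each sampled unit; making this shift work with constants independent of $d$ is the delicate part of the argument.
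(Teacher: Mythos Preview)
The paper does not prove this lemma at all; it is simply quoted as \cite[Theorem~2.2]{lu2021priori} and used as a black box. So there is no proof in the present paper to compare against.

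That said, your sketch is a faithful reconstruction of the argument in the cited reference: cosine expansion of $u$, product-to-sum identity reducing each $\Phi_k$ to one-dimensional cosine ridges $\cos(\pi|\tilde k|_1\, v\cdot x)$ with $|v|_1=1$, the Peano integral-remainder formula (using $g'(0)=0$) to rewrite each such ridge as a superposition of $\relu(v\cdot x-b)$ with total mass $\lesssim\pi^2|k|_1^2|\hat u(k)|$, passage from $\relu$ to $\sp_\tau$ via the approximate-identity property of $\sp_\tau''$, and Maurey sampling in the Hilbert space $H^1(\Omega)$ using the uniform bound $\sup_{|w|_1=1,|t|\le1}\|\sp_\tau(w\cdot x-t)\|_{H^1(\Omega)}\le C$, with $\tau=\sqrt m$ balancing the mollification and sampling errors. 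You have also correctly isolated the one genuinely delicate point: a naive per-neuron replacement $\relu\to\sp_\tau$ would cost $O(\tau^{-1/2})$ in the $H^1$-seminorm and ruin the rate, so one must keep the $\sp_\tau$ representation at the density level and use smoothness of the density (here $b\mapsto\cos(\pi|k|_1 b)$) to control the gradient error; this is precisely the device used in the cited proof. The attribution of the $\log m$ factor to a cosine-mode truncation is not quite how it arises there --- it comes from the $\sp_\tau$ tail/boundary terms in the mollification step rather than from any truncation in $k$ --- but this does not affect the structure or correctness of the argument.
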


With the above approximation result at hand, we are ready to state the main generalization theorem as follows.
\begin{theorem}\label{thm:gen1}
Assume that $\mH$ satisfies Assumptions \ref{ass:v} and \ref{ass:spect}.  Assume also that the ground state $u^\ast \in \mB(\Omega)$. Let $u_{n}^m$ be a minimizer of the empirical loss $\mE_n$ within the set $\mF = \mF_{\sp{\tau},m}(B)$ with $B = \|u^\ast\|_{\mB(\Omega)}$ and with $\tau =\sqrt{m}$. Given $\delta \in (0,\frac{1}{3})$, assume that $n$ and $m$ are large enough so that $\xi_i(n,\delta) \leq 1/2,i=1,2,3$ and $\eta(B,m)\leq 1/2$ where  $\xi_i(n,\delta)$ are defined in \eqref{eq:xi1}, \eqref{eq:xi2} and \eqref{eq:xi3}, and $\eta(B,m)$ is defined in \eqref{eq:eta}. 
Then with probability at least $1-3 \delta$, 
$$
\mE(u_n^m) - \mE(u^\ast) \leq  \frac{C_1\ln(\frac{1}{\delta})\sqrt{m (\ln m + 1)}}{\sqrt{n}} + \frac{C_2(\ln m + 1)}{\sqrt{m}},
$$
where $C_1$ depends on $\|u^\ast\|_{\mB(\Omega)},d,V_{\max}$ polynomially and $C_2$ depends on $\|u^\ast\|_{\mB(\Omega)}$ linearly.  In particular, with the choice $m = \sqrt{n}$, we have that there exists $C_3>0$ such that with probability at least $1-3\delta$,
$$
\mE(u_n^m) - \mE(u^\ast) \leq C_3\ln\Big(\frac{1}{\delta}\Big)\cdot n^{-\frac{1}{4}}.
$$
\end{theorem}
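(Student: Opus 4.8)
The plan is the usual learning-theoretic split of the energy excess into an \emph{approximation error} and a \emph{statistical error}, tied together by optimality of $u_n^m$. I would apply Lemma \ref{lem:appro} to $u^\ast$ to produce $\tilde u := u_m \in \mF_{\sp_\tau,m}(\|u^\ast\|_{\mB(\Omega)}) = \mF$ with $\|\tilde u - u^\ast\|_{H^1(\Omega)} \le \|u^\ast\|_{\mB(\Omega)}(6\log m + 30)/\sqrt m$, and then write
\begin{equation*}
\mE(u_n^m) - \mE(u^\ast) = \underbrace{\bigl(\mE(u_n^m) - \mE_n(u_n^m)\bigr)}_{(\mathrm{I})} + \underbrace{\bigl(\mE_n(u_n^m) - \mE_n(\tilde u)\bigr)}_{\le\, 0} + \underbrace{\bigl(\mE_n(\tilde u) - \mE(\tilde u)\bigr)}_{(\mathrm{II})} + \underbrace{\bigl(\mE(\tilde u) - \mE(u^\ast)\bigr)}_{(\mathrm{III})},
\end{equation*}
where the second term is $\le 0$ since $\tilde u \in \mF$ and $u_n^m$ minimizes $\mE_n$ over $\mF$. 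It then remains to control $(\mathrm{III})$ deterministically and $(\mathrm{I})$, $(\mathrm{II})$ with high probability.

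For $(\mathrm{III})$ I would use scale invariance to work with the normalization $\|u^\ast\|_{L^2}=1$, so that $\mE_2(u^\ast)=1$ and $\mE_V(u^\ast)=\lambda^\ast$, and expand
\begin{equation*}
\mE(\tilde u) - \mE(u^\ast) = \frac{\mE_V(\tilde u) - \mE_V(u^\ast)}{\mE_2(\tilde u)} + \mE_V(u^\ast)\,\frac{\mE_2(u^\ast) - \mE_2(\tilde u)}{\mE_2(\tilde u)}.
\end{equation*}
Each numerator is a difference of quadratic forms that I would bound linearly in the $H^1$-error by Cauchy--Schwarz, namely $|\mE_V(\tilde u) - \mE_V(u^\ast)| \lesssim (1+V_{\max})\|\tilde u - u^\ast\|_{H^1}(\|\tilde u\|_{H^1}+1)$ and $|\mE_2(\tilde u)-1| \lesssim \|\tilde u - u^\ast\|_{L^2}(\|\tilde u\|_{L^2}+1)$. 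The smallness condition on $\eta(B,m)$ guarantees $\mE_2(\tilde u)\ge 1/4$ and $\|\tilde u\|_{H^1}\le 1+\|u^\ast\|_{H^1}$, which combined with Lemma \ref{lem:appro} yields $(\mathrm{III}) \le C_2\|u^\ast\|_{\mB(\Omega)}(\ln m + 1)/\sqrt m$ with $C_2$ linear in $\|u^\ast\|_{\mB(\Omega)}$.

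For the statistical terms I would first record that $\mF = \mF_{\sp_\tau,m}(B)$ is uniformly bounded in $W^{1,\infty}(\Omega)$: from $|w_i|_1=1$, $|x|_\infty\le 1$, $|t_i|\le 1$, $0\le\sp_\tau'=\sigma(\tau\,\cdot)\le 1$, and $\sp_\tau(z)\le|z|+(\ln 2)/\tau$, one gets $\|u\|_{L^\infty}\lesssim B$ and $\|\nabla u\|_{L^\infty}\le\sum_i|\gamma_i|\le 4B$ for every $u\in\mF$, so the integrands $|\nabla u|^2+V|u|^2$ and $|u|^2$ are bounded by $O((1+V_{\max})B^2)$ pointwise. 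Then by symmetrization and McDiarmid's bounded-difference inequality, with probability at least $1-\delta$,
\begin{equation*}
\sup_{u\in\mF}\bigl|\mE_{n,V}(u)-\mE_V(u)\bigr| \le 2\,\mathrm{Rad}_n\bigl(\{\,|\nabla u|^2+V|u|^2:u\in\mF\,\}\bigr) + C(1+V_{\max})B^2\sqrt{\tfrac{\ln(1/\delta)}{n}},
\end{equation*}
and similarly for $\sup_{u\in\mF}|\mE_{n,2}(u)-\mE_2(u)|$ with the class $\{|u|^2:u\in\mF\}$. Estimating the two Rademacher complexities is the technical heart: for $\{|u|^2\}$ the contraction principle and the standard two-layer-network bound give $O(B^2\sqrt{(\ln m)/n})$, while for the energy class I would write $\nabla u(x)=\sum_i\gamma_i\sigma(\tau(w_i\cdot x - t_i))w_i$ and invoke a vector-valued contraction inequality, where the fact that $\sigma(\tau\,\cdot)$ is $(\tau/4)$-Lipschitz with $\tau=\sqrt m$ forces a factor $\sqrt m$ into the bound, giving $O((1+V_{\max})B^2\sqrt{m(\ln m+1)/n})$. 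This $\sqrt m$ blow-up coming from the rescaled activation is the main obstacle of the argument, and it is what produces the $\sqrt{m(\ln m + 1)}$ in the statement. On the intersection of the (at most) three events of probability $\ge 1-\delta$ — which has probability $\ge 1-3\delta$ by a union bound — these uniform deviations are bounded by the quantities $\xi_i(n,\delta)$ appearing in the statement, and the hypotheses $\xi_i\le 1/2$ keep every empirical denominator bounded away from $0$.

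Finally I would convert the uniform deviations into bounds on $(\mathrm{I})$ and $(\mathrm{II})$ via the ratio identity $|\mE(u)-\mE_n(u)| \le \frac{|\mE_V(u)-\mE_{n,V}(u)|}{\mE_2(u)} + \mE_n(u)\,\frac{|\mE_2(u)-\mE_{n,2}(u)|}{\mE_2(u)}$ applied to $u=u_n^m$ and $u=\tilde u$. Here $\mE_2(u_n^m)=1$ by normalization, $\mE_2(\tilde u)\ge 1/4$ as above, and $\mE_n(u)$ is bounded in both cases — directly for $\tilde u$ from the deviation bounds, and for $u_n^m$ since $\mE_n(u_n^m)\le\mE_n(\tilde u)\le\mE(\tilde u)+|(\mathrm{II})|$. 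Summing $(\mathrm{I})+(\mathrm{II})\lesssim \ln(1/\delta)\sqrt{m(\ln m+1)/n}$, with the polynomial $\|u^\ast\|_{\mB(\Omega)},d,V_{\max}$-dependence absorbed into $C_1$, together with $(\mathrm{III})\lesssim\|u^\ast\|_{\mB(\Omega)}(\ln m+1)/\sqrt m$, gives the first displayed bound; taking $m=\sqrt n$ balances the two terms up to logarithmic factors and yields the $n^{-1/4}$ rate of the second claim.
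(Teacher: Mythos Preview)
Your proposal is correct and follows the same high-level route as the paper: the paper's proof literally reads ``combine Theorem~\ref{thm:oracle}, Theorem~\ref{thm:energydiff} and Theorem~\ref{thm:rnGsp}'', i.e.\ exactly your decomposition into $(\mathrm{I})+(\le 0)+(\mathrm{II})+(\mathrm{III})$, your Rayleigh-quotient expansion for $(\mathrm{III})$, and Rademacher-complexity control of $(\mathrm{I}),(\mathrm{II})$ via the three events $A_1,A_2,A_3$ and a union bound. Your choice of $\tilde u=u_m$ rather than $u_{\mF}=\argmin_{\mF}\mE$ as the intermediate point is a harmless simplification; the paper uses $u_{\mF}$ in the oracle inequality but then bounds $\mE(u_{\mF})-\mE(u^\ast)\le \mE(u_m)-\mE(u^\ast)$ anyway in Theorem~\ref{thm:energydiff}.

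The one genuine technical difference is how the Rademacher complexities $R_n(\mG_1),R_n(\mG_2)$ are estimated. The paper goes through Dudley's entropy integral and $L^\infty$-covering numbers of the neural-network classes (Theorem~\ref{thm:rnGsp} and Lemma~\ref{lem:bdNg}, relying on \cite[Lemmas~5.5 and~5.7]{lu2021priori}); the $\sqrt{m}$ there enters via $\Lambda_2\lesssim B^2\sqrt{m}$. You instead propose Ledoux--Talagrand contraction for the squares and a (vector-valued) contraction step for $\nabla u$, where the $\sqrt{m}$ comes directly from the Lipschitz constant $\tau/4=\sqrt{m}/4$ of $\sigma(\tau\,\cdot)$. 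Both routes yield the same $O\bigl(\sqrt{m(\ln m+1)/n}\bigr)$ rate with polynomial dependence on $B,d,V_{\max}$; your contraction argument is arguably more elementary, while the covering-number machinery is reusable verbatim from the authors' prior work.
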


The proof of Theorem \ref{thm:gen1}   relies on decomposing the generalization error into the sum of the approximation error (see Section \ref{sec:appro}) and statistical error arising from the Monte-Carlo approximation. The statistical error is further bounded by controlling the Rademacher complexity of certain neural network classes associated to the loss formulation (see Section \ref{sec:stats}). 
Thanks to Proposition \ref{prop:stab}, the generalization error in terms of the energy excess translates directly to that in terms of the $H^1$-norm of $P^\perp u_n^m$. 
\begin{theorem}\label{thm:gen2}
Suppose that the assumption of Theorem \ref{thm:gen1} holds and suppose further that $\mH$ has a spectral gap. Then there exist positive constants $C_4$ and $ C_5$ depending polynomially on $\|u^\ast\|_{\mB(\Omega)},d, V_{\min}, V_{\max}$ and $\lambda_1-\lambda_0$ such that with probability at least $1-3\delta$, 
 $$ 
\|P^\perp u_n^m\|_{H^1(\Omega)}^2 \leq \frac{C_4\ln(\frac{1}{\delta})\sqrt{m (\ln m + 1)}}{\sqrt{n}} + \frac{C_5 \ln(\frac{1}{\delta})(\ln m + 1)}{\sqrt{m}}.
 $$
Setting $m = \sqrt{n}$ in the above leads to that the follow holds for some $C_6>0$ with probability at least $1-3\delta$:
$$
\|P^\perp u_n^m\|_{H^1(\Omega)}^2 \leq C_6 \ln\Big(\frac{1}{\delta}\Big)\cdot n^{-\frac{1}{4}}. 
$$
\end{theorem}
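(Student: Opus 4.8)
The plan is to derive the stated bound directly from the deterministic stability estimate of Proposition \ref{prop:stab} together with the high-probability energy-excess bound of Theorem \ref{thm:gen1}; no new ingredient is needed. First, since the Rayleigh quotient $\mE$ (and the empirical analogue $\mE_n$) is invariant under scalar multiplication, we may replace the minimizer $u_n^m$ by $u_n^m/\|u_n^m\|_{L^2(\Omega)}$ without changing $\mE(u_n^m)$ or the quantity $\|P^\perp u_n^m\|_{H^1(\Omega)}$ that we wish to control (what enters the bound is the direction of $u_n^m$, and rescaling does not affect $\mE$). Hence, consistent with the normalization convention adopted in Section \ref{sec:main}, we assume throughout that $\|u_n^m\|_{L^2(\Omega)}=1$.

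With this normalization, the last inequality of Proposition \ref{prop:stab} applied to $u=u_n^m$, using the spectral gap $\lambda_1-\lambda^\ast>0$ from Assumption \ref{ass:spect} to keep the prefactor finite, gives
$$
\|P^\perp u_n^m\|_{H^1(\Omega)}^2 \leq \Bigl(\frac{V_{\max}-V_{\min}+1}{\lambda_1-\lambda^\ast}+1\Bigr)\bigl(\mE(u_n^m)-\mE(u^\ast)\bigr).
$$
Since the hypotheses of Theorem \ref{thm:gen1} are precisely those assumed here, that theorem yields, on an event of probability at least $1-3\delta$,
$$
\mE(u_n^m)-\mE(u^\ast) \leq \frac{C_1\ln(1/\delta)\sqrt{m(\ln m+1)}}{\sqrt n} + \frac{C_2(\ln m+1)}{\sqrt m}.
$$
Multiplying the second display by the prefactor of the first and setting $C_4:=\bigl(\tfrac{V_{\max}-V_{\min}+1}{\lambda_1-\lambda^\ast}+1\bigr)C_1$ and $C_5:=\bigl(\tfrac{V_{\max}-V_{\min}+1}{\lambda_1-\lambda^\ast}+1\bigr)C_2$ produces the asserted inequality, where on the term with $C_5$ we freely insert an extra factor $\ln(1/\delta)\geq\ln 3>1$ (legitimate since $\delta\in(0,\tfrac13)$). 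The claimed polynomial dependence of $C_4,C_5$ on $\|u^\ast\|_{\mB(\Omega)},d,V_{\min},V_{\max}$ and $\lambda_1-\lambda_0$ is then inherited from the dependence of $C_1,C_2$ recorded in Theorem \ref{thm:gen1} together with the explicit prefactor above.

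Finally, choosing $m=\sqrt n$ turns both terms on the right-hand side into $n^{-1/4}$ up to factors logarithmic in $n$; absorbing these logarithmic factors into a single constant $C_6$, exactly as in the passage to the second display of Theorem \ref{thm:gen1}, gives $\|P^\perp u_n^m\|_{H^1(\Omega)}^2\leq C_6\ln(1/\delta)\,n^{-1/4}$. I do not expect a genuine obstacle in this argument: Theorem \ref{thm:gen2} is a corollary of two results already in hand. The only points deserving care are the justification of the normalization $\|u_n^m\|_{L^2(\Omega)}=1$ through scale-invariance of $\mE$ rather than of the hypothesis class $\mF_{\sp_\tau,m}(B)$, and the routine bookkeeping of how the spectral-gap factor $1/(\lambda_1-\lambda^\ast)$ combines with the constants coming from Theorem \ref{thm:gen1}; neither is a real difficulty.
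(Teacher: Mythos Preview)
Your proposal is correct and matches the paper's approach exactly: the paper treats Theorem~\ref{thm:gen2} as an immediate corollary of Proposition~\ref{prop:stab} applied to the normalized $u_n^m$ together with the energy-excess bound of Theorem~\ref{thm:gen1}, and your write-up fills in precisely those details. One small wording issue: $\|P^\perp u_n^m\|_{H^1}$ does scale with $u_n^m$, so it is not literally ``unchanged'' by rescaling; rather, the normalization $\|u_n^m\|_{L^2}=1$ is the paper's standing convention (stated in Section~\ref{sec:main}), which is all you need.
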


Theorem \ref{thm:gen1} and Theorem \ref{thm:gen2} show that with high probability the convergence rate of the generalization error of two-layer network for approximating the ground state $u^\ast$ and the corresponding leading eigenvalue $\lambda^\ast = \mE(u^\ast)$ does not suffer from the curse of dimensionality provided that the ground state $u^\ast \in \mB(\Omega)$. 

Finally we justify the regularity assumption on the ground state in the following theorem. This gives a novel solution theory of high dimensional eigenvalue problems in Barron type spaces.

 \begin{theorem}\label{thm:reg}
 Assume that $V \in \mB^s(\Omega)$ with $s\geq 0$ and $V$ satisfies Assumption \ref{ass:v}. Then the ground state  $u^\ast \in \mB^{s+2}(\Omega)$. 
 \end{theorem}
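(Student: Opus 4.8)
The plan is to identify the ground state $u^\ast$ with the principal eigenfunction of a compact, positivity-preserving operator on the spectral Barron space $\mB^0(\Omega)$, to deduce $u^\ast \in \mB^0(\Omega)$ from the Krein--Rutman theorem, and then to bootstrap up to $\mB^{s+2}(\Omega)$ using the algebra and smoothing structure of the scale $\{\mB^r(\Omega)\}_{r\ge 0}$.

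First I would assemble three elementary facts about the spaces $\mB^r(\Omega)$, most of which are already available from \cite{lu2021priori}: (a) each $\mB^r(\Omega)$ is a Banach algebra, $\|fg\|_{\mB^r}\le C_r\|f\|_{\mB^r}\|g\|_{\mB^r}$, which follows from the product-to-sum formula for cosines together with $|k\pm\ell|_1 \le |k|_1+|\ell|_1$; (b) the Neumann resolvent $G_c := (-\Delta + c)^{-1}$ is diagonalized by the cosine system $\{\Phi_k\}$ with multipliers $(\pi^2|k|_2^2 + c)^{-1}$, and since $|k|_1^2 \le d\,|k|_2^2$ one obtains a bounded map $G_c : \mB^r(\Omega) \to \mB^{r+2}(\Omega)$ with norm depending only on $c$ and $d$; (c) the inclusion $\mB^{r+2}(\Omega)\hookrightarrow\mB^r(\Omega)$ is compact (it is a weighted $\ell^1$ embedding whose multiplier ratio tends to $0$ at infinity), and $\mB^0(\Omega)\hookrightarrow C(\overline\Omega)$ with $\|\cdot\|_{C(\overline\Omega)}\le\|\cdot\|_{\mB^0}$.

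Next I would build the solution operator on $\mB^0(\Omega)$. Because $V\ge V_{\min}>0$, the operator $\mH$ is bounded below by $V_{\min}$ on $L^2(\Omega)$, hence boundedly invertible there. Using the factorization $\mH = (-\Delta + V_{\min})\bigl(I + G_{V_{\min}}\,M_{V-V_{\min}}\bigr)$, where $M_g$ is multiplication by $g$, facts (a)--(c) show that $G_{V_{\min}}M_{V-V_{\min}}$ is a compact operator on $\mB^0(\Omega)$, while $I + G_{V_{\min}}M_{V-V_{\min}}$ is injective on $\mB^0(\Omega)\subset L^2(\Omega)$ since $\mH w = 0$ forces $w = 0$; the Fredholm alternative then makes it invertible on $\mB^0(\Omega)$, so that $\mH^{-1} = (I + G_{V_{\min}}M_{V-V_{\min}})^{-1}G_{V_{\min}}$ is well defined, bounded, and compact on $\mB^0(\Omega)$. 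Moreover $\mH^{-1}$ is strongly positivity-preserving: for $0\le f\in\mB^0(\Omega)$ with $f\not\equiv 0$, the function $w=\mH^{-1}f$ is continuous and, by the strong maximum principle together with the Hopf lemma for the Neumann condition, strictly positive on $\overline\Omega$, hence lies in the interior of the cone $K\subset\mB^0(\Omega)$ of nonnegative functions (which is nonempty, e.g.\ it contains a $\mB^0$-ball around the constant $1$). Applying the Krein--Rutman theorem \cite{krein1962linear} to $\mH^{-1}$ on $(\mB^0(\Omega),K)$ produces a strictly positive eigenfunction $\phi\in\mB^0(\Omega)$ with $\mH\phi = \rho^{-1}\phi$, $\rho = r(\mH^{-1})>0$. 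Since the ground state is, up to scaling, the unique eigenfunction of $\mH$ that does not change sign (any other eigenfunction is $L^2$-orthogonal to the positive $u^\ast$), we get $\phi = c\,u^\ast$ and therefore $u^\ast\in\mB^0(\Omega)$.

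Finally I would bootstrap. From the weak formulation, $u^\ast$ is the unique $H^1(\Omega)$-solution of $(-\Delta + 1)u^\ast = (1 + \lambda^\ast - V)u^\ast$ with Neumann data, i.e.\ $u^\ast = G_1\bigl((1 + \lambda^\ast - V)u^\ast\bigr)$. If $u^\ast\in\mB^t(\Omega)$ for some $t\ge 0$, then by (a) the right-hand side lies in $\mB^{\min(s,t)}(\Omega)$, and by (b) we conclude $u^\ast\in\mB^{\min(s,t)+2}(\Omega)$; iterating from $t=0$, the regularity index increases by $2$ at each step until it passes $s$, after which it stabilizes at $s+2$, so $u^\ast\in\mB^{s+2}(\Omega)$. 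I expect the crux to be the construction of $\mH^{-1}$ as a compact, strongly positivity-preserving operator on $\mB^0(\Omega)$ — in particular the Fredholm-type invertibility and the verification that the maximum-principle and Hopf inputs are compatible with membership in $\mB^0(\Omega)$ — together with making precise the compactness of the embedding $\mB^{r+2}(\Omega)\hookrightarrow\mB^r(\Omega)$; the bootstrap step is routine once the algebra and smoothing bounds are in hand.
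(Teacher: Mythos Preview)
Your overall strategy---apply Krein--Rutman to the compact inverse $\mS=\mH^{-1}$ on a spectral Barron space and then upgrade regularity---is exactly the paper's, but the implementation differs in several places. The paper works directly on $\mB^s(\Omega)$: boundedness of $\mS:\mB^s\to\mB^{s+2}$ is imported wholesale from \cite{lu2021priori} rather than rebuilt via your Fredholm factorization $\mH=(-\Delta+V_{\min})(I+G_{V_{\min}}M_{V-V_{\min}})$; strong positivity of $\mS$ is obtained through the representation $\mS f=\int_0^\infty e^{-t\mH}f\,dt$, Lie--Trotter splitting, and Gaussian lower bounds for the Neumann heat kernel on the cube, rather than via the strong maximum principle and Hopf lemma; and the identification of the Krein--Rutman eigenpair with $(1/\lambda^\ast,u^\ast)$ goes through a separate lemma equating $r_{\mB^s}(\mS)$ with $r_{L^2}(\mS)$, whereas you argue more directly that any strictly positive eigenfunction in $\mB^0\subset L^2$ must be a scalar multiple of $u^\ast$. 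Since the paper already lands in $\mB^s$, one application of $\mS$ gives $\mB^{s+2}$ and no bootstrap (hence no Banach-algebra property of $\mB^r$) is needed; your bootstrap is transparent but buys that extra ingredient. One technical caution on your route: the classical Hopf boundary lemma needs an interior ball condition, which fails at the edges and corners of $[0,1]^d$, so to make your positivity argument rigorous you should either pass to the even reflection across the faces (turning a boundary zero into an interior minimum of the extended problem, where the strong minimum principle applies) or revert to the heat-kernel argument---this robustness on Lipschitz domains is precisely what the paper's choice of proof purchases.
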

Our idea of proving  Theorem \ref{thm:reg} differs from the standard proof of Sobolev regularity of eigenfunctions, which usually relies on bootstrapping  estimates on the  weak derivative  of the eigenfunctions. Instead, we prove Theorem  \ref{thm:reg} by reformulating the ground state problem  as a fixed point problem on the spectral Barron space $\mB^s(\Omega)$; the existence of a nontrivial fixed point is proved by employing the celebrated Krein-Rutman theorem \cite{krein1962linear}.  See Section \ref{sec:reg} for a complete proof.

\section{Stability estimate of the ground state (Proof of Proposition \ref{prop:stab})} \label{sec:stab}
In this section, we show that the offset $\|P^\perp u\|_{L^2(\Omega)}$ of any $u\in H^1(\Omega)$ can be bounded by the energy excess $ \mE(u) - \mE(u^\ast)$.

\begin{proof}
Note that by the spectral gap assumption 
\begin{equation*}
    \mH P^{\perp} - \lambda_0 P^\perp \geq \lambda_1 - \lambda_0 > 0.
\end{equation*}
Let us decompose
\begin{equation*}
  u = P u + P^{\perp} u =: \alpha u_0 + u_{\perp},
\end{equation*}
where $\alpha = \average{u, u_0}$ and $\average{ u_{\perp}, u_0 } = 0$. Substituting above into the Rayleigh quotient, we have
\begin{equation*}
  \begin{aligned}
    \mE(u) & = \frac{\abs{\alpha}^2 \lambda_0 + \average{u_{\perp}, \mH u_{\perp}} }{\norm{u}^2}  \\
    & \geq \frac{\lambda_0 \norm{u}^2 + (\lambda_1 - \lambda_0) \norm{u_{\perp}}^2 }{\norm{u}^2} \\
    & = \lambda_0 + (\lambda_1 - \lambda_0)  \frac{\norm{u_{\perp}}^2}{\norm{u}^2}\\
    & = \mE(u^\ast)+ (\lambda_1 - \lambda_0)  \frac{\norm{u_{\perp}}^2}{\norm{u}^2}.
  \end{aligned}
\end{equation*}
Thus, the $L^2$-norm of $u_{\perp} \equiv P^{\perp} u$ can be bounded as
\begin{equation}\label{eq:l2bound}
  \norm{u_{\perp}}^2 \leq \frac{\mE(u) - \mE(u_0)}{\lambda_1 - \lambda_0} \norm{u}^2.
\end{equation}
Note that this bound cannot be improved as can be seen by taking a linear combination of the first and second eigenstates of $\mH$. 

To obtain the bound on $\nabla u^\perp$, we notice that 
\begin{equation*}
  \begin{aligned}
    \mE(u) = \lambda_0 + \frac{ \average{u_{\perp}, (\mH -\lambda_0) u_{\perp}} }{\norm{u}^2}
  \end{aligned}
\end{equation*}
Thus
\begin{equation*}
  \begin{aligned}
  \average{u_{\perp}, (\mH -\lambda_0) u_{\perp}} & = \norm{\nabla u_{\perp}}_{L^2}^2 + \average{u_{\perp}, (V - \lambda_0) u_{\perp}} \\
&   \leq \bigl(\mE(u) - \mE(u_0)\bigr) \norm{u}^2. 
  \end{aligned} 
\end{equation*}
Rearranging the terms, we arrive at
  \begin{align*}
    \norm{\nabla u_{\perp}}^2 & \leq -  \average{u_{\perp}, (V - \lambda_0) u_{\perp}} + \bigl(\mE(u) - \mE(u_0)\bigr) \norm{u}^2 \\
    & \leq \bigl( V_{\max} - V_{\min} \bigr) \norm{u_{\perp}}^2 + \bigl(\mE(u) - \mE(u_0)\bigr) \norm{u}^2 \\
    & \stackrel{\eqref{eq:l2bound}}{\leq} \bigl(\mE(u) - \mE(u_0)\bigr) \Bigl(\frac{V_{\max} - V_{\min}}{\lambda_1 - \lambda_0}  + 1\Bigr)  \norm{u}^2. \qedhere
  \end{align*} 
\end{proof}

\section{Proof of Theorem \ref{thm:gen1}}\label{sec:proofmain}

\subsection{Oracle Inequality  for the Generalization Error}
We first introduce an oracle inequality for the empirical loss that decomposes the  generalization error into the sum of approximation error and statistical error.  Recall the population loss $\mE$ and the empirical loss $\mE_n$ defined in \eqref{eq:e} and \eqref{eq:en} respectively. Consider the minimization of $\mE_n$ in a function class $\mF$ and we denote by $u_n$ a minimizer of $\mE$ within $\mF$, i.e. $u_n = \argmin_{u\in \mF} \mE_n(u)$.

We aim to bound the energy excess $\Delta \mE_n := \mE(u_n) - \mE(u^\ast)$ where $u^\ast$  is the exact ground state.
Let us first decompose $\Delta \mE_n$  as follows:
\begin{equation}\label{eq:dE}
    \Delta \mE_n = 
    \mE (u_n) 
    - \mE_n(u_n) + \mE_n(u_n) - \mE_n(u_\mF) +
    \mE_n(u_\mF) - 
    \mE(u_\mF) +
    \mE(u_\mF)
    - \mE(u^\ast).
\end{equation}
Here $u_{\mF} = \argmin_{u\in \mF} \mE (u)$. 
Note that $\mE_n(u_n) - \mE_n(u_\mF) \leq 0$ since $u_n$ is the minimizer of $\mE_n$. Therefore 
$$
\begin{aligned}
\Delta \mE_n & \leq \bigl( \mE (u_n) 
    - \mE_n(u_n)\bigr) +
    \bigl( \mE_n(u_\mF) - 
    \mE(u_\mF)\bigr)  +
    \bigl( \mE(u_\mF)
    - \mE(u^\ast) \bigr) =: T_1 + T_2 + T_3.
\end{aligned}
$$
Note that  $T_1$ and $T_2$ are statistical error terms that arises from Monte-Carlo approximation for integration. The third term $T_3$ is the approximation error term due to restricting
    the minimization of $\mE$ from over the set $H^1(\Omega)$ to $\mF$; see an upper bound of $T_3$ in Theorem \ref{thm:energydiff} when $\mF$ is chosen as the set of two-layer neural networks.  To control the statistical errors, we employ the well-known tool of Rademacher complexity, for which we recall its definition as follows. 
    
   \begin{definition}
We define for a set of random variables $\{Z_j\}_{j=1}^n$ independently distributed according to $\mP_{\Omega}$ and a function class $\mS$  the random variable
$$
\hat{R}_n (\mS) := \bE_{\sigma} \Bigl[\sup_{g\in \mS} \Big| \frac{1}{n}\sum_{j=1}^n \sigma_j g(Z_j) \Big| \;\Big|\; Z_1, \cdots, Z_n\Bigr],
$$
where the expectation $\bE_\sigma$ is taken with respect to the independent uniform Bernoulli sequence $\{\sigma_j\}_{j=1}^n$ with $\sigma_j \in \{\pm 1\}$. Then the Rademacher complexity of $\mS$ defined by $R_n (\mS) = \bE_{\mP_\Omega} [\hat{R}_n (\mS) ]$. 
\end{definition}

    Now we are ready to bound $T_1$ and $T_2$ in terms of Rademacher complexities of suitable sets. 
    
 \textbf{Bounding $T_1$.}
Thanks to the scaling-invariance of $\mE$ and $\mE_n$, we can assume that   $u_n$ is normalized, i.e. $\|u_n\|_2=1$. Hence  we have 
$$\begin{aligned}
 T_1 & \leq \biggl\lvert \frac{\mE_{n,V}(u_n)}{\mE_{n,2}(u_n)} - \frac{\mE_{V}(u_n)}{\mE_{2}(u_n)} \biggr\rvert \\
& \leq  \frac{\Big|\mE_{n,V}(u_n) - \mE_{V}(u_n)\Big|}{\mE_{n,2}(u_n)}  +  \frac{\mE_V(u_n)}{\mE_2(u_n)\cdot \mE_{n,2}(u_n)} \Big|\mE_2(u_n) -\mE_{n,2}(u_n)\Big|\\
& =:T_{11} + T_{12}. \\
\end{aligned}$$
To bound $T_{11}$ and $T_{12}$, let us define two sets of functions
\begin{equation}
    \begin{aligned}\label{eq:G12}
\mG_1 & := \{g:  g = u^2 \text{ where } u\in \mF\},\\
\mG_2 & := \{g : g = |\nabla u|^2 + V |u|^2 \text{ where } u \in \mF\}.
\end{aligned}
\end{equation}
We assume that the set $\mF$ satisfies  $\sup_{u\in \mF} \|u\|_{L^\infty} \leq M_{\mF} < \infty$ so that $\sup_{g\in \mG_1} \|g\|_{L^\infty} \leq M_{\mF}^2$. Assume further that $ \sup_{g\in \mG_2}\|g\|_{L^\infty} \leq M_{\mG_2}< \infty$. Now 
let us first derive a high-probability lower bound for  $\mE_{n,2}(u_n)$. 
For doing so, we  define for $n\in \N$ and $ \delta>0$ the constant 
\begin{equation}\label{eq:xi1}
    \xi_1(n,\delta)  := 2R_n(\mG_1) + 4M_{\mF}^2\cdot \sqrt{\frac{2\ln(4/\delta)}{n}},
\end{equation}
where $R_n(\mG_1)$ denotes the Rademacher complexity of the set $\mG_1$.
We also define the event 
$$
A_1(n,\delta) := \left\{|\mE_{n,2} (u_n)- \mE_{2} (u_n)| \leq \xi_1(n,\delta)\right\}.
$$
Then applying Lemma \ref{lem:pacgen} to 
$
\mG_1$ we have that  
\begin{equation}\label{eq:e2n}
    \bP \big[A_1(n,\delta) \big] \geq 1-\delta.
\end{equation}
Since by assumption $\mE_2(u_n)=1$,  within the event $A_1(n,\delta)$ we have $\mE_{n,2} (u_n) \geq 1 - \xi_1(n,\delta)$ and hence 
\begin{equation}\label{eq:T1den}
\bP\big[\mE_{n,2} (u_n) \geq 1 - \xi_1(n,\delta)\big] \geq 1 -\delta.
\end{equation}
Notice that  $\mE_V(u_n) \leq M_{\mG_2}$ by the assumption on $\mG_2$. Therefore if $\xi_1(n,d) < 1$, then  
\begin{equation}\label{eq:T12}
    \bP\Big[T_{12} \leq \frac{M_{\mG_2}\cdot \xi_1(n,\delta)}{1- \xi_1(n,\delta)}\Big] \geq \bP\big[A_1(n,\delta)\big]  \geq 1-\delta.
\end{equation}

Next to bound $T_{11}$, let us define the constant \begin{equation}\label{eq:xi2}
\xi_2(n,\delta) := 2R_n(\mG_2) + 4M_{\mG_2} \cdot \sqrt{\frac{2\ln(4/\delta)}{n}}
\end{equation}
and the event 
$$
A_2(n,\delta) := \Big\{ \sup_{u\in \mF} \Big|\mE_{n,V}(u) - \mE_{V}(u)\Big| \leq \xi_2(n,\delta)  \Big\}.
$$
Then applying again Lemma \ref{lem:pacgen} to $\mG_2$ leads to
\begin{equation}\label{eq:T1num}
   \bP \big[A_2(n,\delta)\big]
   \geq 1-\delta.
\end{equation}
As a result of \eqref{eq:T1den} and \eqref{eq:T1num}, one has that  if $\xi_1(n,d) < 1$ then 
\begin{equation}\label{eq:T11}
\bP\Big[T_{11} \leq \frac{\xi_2(n,d)}{1 - \xi_1(n,d)}\Big] \geq \bP\Big[A_1(n,\delta)\cap A_2(n,\delta)\Big] \geq 1-2\delta.
\end{equation}
Therefore it follows from \eqref{eq:T12} and \eqref{eq:T11} that 
\begin{equation}\label{eq:T1}
\bP\left[T_{1} \leq\frac{M_{\mG_2} \cdot \xi_1(n,d) + \xi_2(n,d)}{1 - \xi_1(n,d)}\right] \geq \bP\Big[A_1(n,\delta)\cap A_2(n,\delta)\Big]  \geq 1-2\delta.
\end{equation}

\textbf{Bounding $T_2$.} Similar to the process of bounding $T_1$, by assuming $\|u_{\mF}\|_{L^2} = 1$ we first bound $T_2$ as follows 
$$\begin{aligned}
 T_2 & \leq \biggl\lvert \frac{\mE_{n,V}(u_{\mF})}{\mE_{n,2}(u_{\mF})} - \frac{\mE_{V}(u_{\mF})}{\mE_{2}(u_{\mF})} \biggr\rvert \\
& \leq  \frac{\Big|\mE_{n,V}(u_{\mF}) - \mE_{V}(u_{\mF})\Big|}{\mE_{n,2}(u_{\mF})}  +  \frac{\mE_V(u_{\mF})}{\mE_2(u_{\mF})\cdot \mE_{n,2}(u_{\mF})} \Big|\mE_2(u_{\mF}) -\mE_{n,2}(u_{\mF})\Big|\\
& =:T_{21} + T_{22}. 
\end{aligned}$$
Since $u_{\mF}$ does not depend on the sample points $\{X_i\}$, applying Hoeffding's inequality  from Lemma \ref{lem:hoef} yields that
\begin{equation}\label{eq:E2}
   \bP[A_3(n,\delta)] :=  \bP\Big[|\mE_{n,2}(u_{\mF}) - \mE_2(u_{\mF})| \leq \xi_3(n,\delta)\Big] \geq 1-\delta,
\end{equation}
where 
\begin{equation}\label{eq:xi3} \xi_3(n,\delta):= M_{\mF}^2\cdot  \sqrt{\frac{\ln (2/\delta)}{2n}}.
\end{equation}
Since by assumption $\mE_2(u_{\mF}) = 1$, this implies further that 
\begin{equation}\label{eq:En2}
\bP\Big[\mE_{n,2}(u_{\mF}) \geq 1 -\xi_3(n,\delta)\Big] \geq \bP[A_3(n,\delta)] \geq 1-\delta.
\end{equation}
Combining \eqref{eq:E2} and  \eqref{eq:En2} implies that if $\xi_3(n,\delta) < 1$, then 
$$
\bP\left[T_{22} \leq \frac{M_{\mG_2} \cdot \xi_3(n,\delta)}{1 - \xi_3(n,\delta)}\right]  \geq \bP[A_3(n,\delta)] \geq 1-\delta.
$$
In addition, as a consequence of \eqref{eq:En2} and \eqref{eq:T1num}, we have  
$$
\bP\left[T_{21} \leq \frac{\xi_2(n,\delta)}{1 - \xi_3(n,\delta)}\right]  \geq \bP[A_2(n,\delta)\cap A_3(n,\delta)] \geq 1-2\delta.
$$
Therefore it holds that 
\begin{equation}\label{eq:T2}
\bP\left[T_{2} \leq \frac{\xi_2(n,\delta)+M_{\mG_2} \cdot \xi_3(n,\delta) }{1 - \xi_3(n,\delta)}\right] \geq 1-2\delta.
\end{equation}
\begin{lemma}[Hoeffding]\label{lem:hoef}
Let $Z_1, Z_2, \cdots, Z_n$ be i.i.d. random variables with $a_i\leq Z_i\leq b_i$ a.s. Then for any $t>0$,
$$
\bP \Big(\Big|\frac{\sum_{i=1}^n Z_i}{n} - \bE Z\Big| \geq t\Big) \leq 2 \exp\Big(-\frac{2n^2 t^2}{\sum_{i=1}^n (b_i-a_i)^2}\Big).
$$
\end{lemma}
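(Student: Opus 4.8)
The plan is to run the classical Chernoff-bound argument together with the Hoeffding lemma bounding the moment generating function of a bounded, centered random variable. First I would reduce to a one-sided estimate: set $Y_i := Z_i - \bE Z_i$ and $S_n := \sum_{i=1}^n Y_i$, so that $\bE Y_i = 0$, $Y_i \in [a_i - \bE Z_i,\, b_i - \bE Z_i]$ almost surely (an interval of length $b_i - a_i$), and $\{\,|\tfrac1n\sum_{i=1}^n Z_i - \bE Z| \ge t\,\} = \{S_n \ge nt\} \cup \{-S_n \ge nt\}$. By the union bound it therefore suffices to bound each of the two events by $\exp\bigl(-2n^2t^2/\sum_{i=1}^n(b_i-a_i)^2\bigr)$.

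For the upper tail, fix $s>0$; Markov's inequality applied to the nonnegative variable $e^{sS_n}$ gives $\bP(S_n \ge nt) \le e^{-snt}\,\bE[e^{sS_n}]$, and independence yields $\bE[e^{sS_n}] = \prod_{i=1}^n \bE[e^{sY_i}]$. The key ingredient is the \emph{Hoeffding lemma}: if $Y$ has $\bE Y = 0$ and $Y \in [\alpha,\beta]$ almost surely (necessarily $\alpha \le 0 \le \beta$), then $\bE[e^{sY}] \le \exp\bigl(s^2(\beta-\alpha)^2/8\bigr)$. I would prove this by convexity of $z \mapsto e^{sz}$ on $[\alpha,\beta]$: from the identity $Y = \tfrac{\beta - Y}{\beta-\alpha}\alpha + \tfrac{Y-\alpha}{\beta-\alpha}\beta$ one gets $e^{sY} \le \tfrac{\beta-Y}{\beta-\alpha}e^{s\alpha} + \tfrac{Y-\alpha}{\beta-\alpha}e^{s\beta}$, and taking expectations (using $\bE Y = 0$) gives $\bE[e^{sY}] \le (1-p)e^{s\alpha} + p e^{s\beta} = e^{\psi(u)}$ with $p := -\alpha/(\beta-\alpha)$, $u := s(\beta-\alpha)$, and $\psi(u) := -pu + \ln(1 - p + pe^{u})$. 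A direct computation gives $\psi(0) = \psi'(0) = 0$ and $\psi''(u) = q(1-q) \le \tfrac14$ where $q := pe^{u}/(1-p+pe^{u}) \in (0,1)$; Taylor's theorem with Lagrange remainder then yields $\psi(u) \le u^2/8 = s^2(\beta-\alpha)^2/8$.

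Combining the three steps, $\bP(S_n \ge nt) \le \exp\bigl(-snt + \tfrac{s^2}{8}\sum_{i=1}^n(b_i-a_i)^2\bigr)$; minimizing this quadratic exponent over $s>0$ via the choice $s = 4nt/\sum_{i=1}^n(b_i-a_i)^2$ gives $\bP(S_n \ge nt) \le \exp\bigl(-2n^2t^2/\sum_{i=1}^n(b_i-a_i)^2\bigr)$. The lower tail is handled identically by applying the same bound to $-Y_i$, whose range $[-(b_i-\bE Z_i),\,-(a_i-\bE Z_i)]$ again has length $b_i-a_i$; adding the two estimates produces the factor $2$ in the statement. (When $b_i=a_i$ for every $i$ the variable $S_n$ is a.s.\ constant, the probability in question vanishes for $t>0$, and the inequality holds trivially.)

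The only nontrivial point — and hence the main obstacle — is the Hoeffding lemma, specifically the inequality $\psi''(u) \le \tfrac14$. This follows because $\psi'(u) = -p + pe^{u}/(1-p+pe^{u})$, so differentiating once more, $\psi''(u) = \tfrac{p(1-p)e^{u}}{(1-p+pe^{u})^2} = q(1-q)$ with $q$ as above, and $q(1-q)\le\tfrac14$ for all $q\in[0,1]$ by AM--GM. Everything else — the Chernoff reduction, the independence factorization, and the scalar optimization over $s$ — is routine, so I would spell out the computation of $\psi''$ in full and treat the remaining steps briefly.
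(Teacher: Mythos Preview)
Your proof is correct and is the standard Chernoff--Hoeffding argument. Note, however, that the paper does not actually prove this lemma: it is stated as the classical Hoeffding inequality and invoked without proof, so there is no ``paper's own proof'' to compare against. What you have written is precisely the textbook derivation one would supply if asked to fill in the details.
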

We recall the following useful PAC-type generalization bound via the Rademacher complexity. 
\begin{lemma} \cite[Theorem 26.5]{shalev2014understanding} \label{lem:pacgen}
Let $Z_1, Z_2, \cdots, Z_n$ be i.i.d. random variables. 
Let $\mG$ be a function class such that $\sup_{g\in \mG} \|g\|_{L^\infty(\Omega)} \leq C_\mG$ and that $\mG$ is symmetric, i.e. $\mG = -\mG$. Then  with probability at least $1-\delta$, 
$$
\sup_{g\in \mG} \Big|\frac{1}{n}\sum_{i=1}^n g(Z_i) - \bE g(Z)\Big|  \leq 2R_n(\mG) + 4C_\mG \sqrt{\frac{2\ln(4/\delta)}{n}}.
$$
\end{lemma}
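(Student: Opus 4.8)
The plan is to derive this as a standard uniform deviation bound, combining a bounded-difference concentration inequality with a symmetrization argument. First I would use the hypothesis $\mG = -\mG$ to dispose of the absolute value: for every realization of $Z_1,\dots,Z_n$, replacing $g$ by $-g\in\mG$ flips the sign of $\tfrac1n\sum_i g(Z_i) - \bE g(Z)$, so
$$
\sup_{g\in\mG}\Bigl|\tfrac1n\sum_{i=1}^n g(Z_i) - \bE g(Z)\Bigr| \;=\; \sup_{g\in\mG}\Bigl(\tfrac1n\sum_{i=1}^n g(Z_i) - \bE g(Z)\Bigr)\;=:\; F(Z_1,\dots,Z_n),
$$
and it suffices to bound $F$ from above.

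Next I would apply McDiarmid's bounded-differences inequality to $F$. Since $\|g\|_{L^\infty(\Omega)}\le C_\mG$ for all $g\in\mG$, changing a single coordinate $Z_i$ perturbs $\tfrac1n\sum_j g(Z_j)$ by at most $2C_\mG/n$ uniformly in $g$, hence $F$ has bounded differences with constants $2C_\mG/n$. McDiarmid then gives $\bP(F\ge \bE F + t)\le \exp(-nt^2/(2C_\mG^2))$ for all $t>0$; equivalently, with probability at least $1-\delta$, $F\le \bE F + C_\mG\sqrt{2\ln(1/\delta)/n}$.

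The remaining and only substantive step is to show $\bE F\le 2R_n(\mG)$, which I would carry out by the classical ghost-sample symmetrization: introduce an independent copy $Z_1',\dots,Z_n'$ of the sample, rewrite $\bE g(Z)=\bE[\tfrac1n\sum_i g(Z_i')]$, move the outer expectation through the supremum by Jensen's inequality to get $\bE F\le \bE\sup_{g\in\mG}\tfrac1n\sum_i(g(Z_i)-g(Z_i'))$, then use that each summand $g(Z_i)-g(Z_i')$ is symmetric to insert i.i.d. Rademacher signs $\sigma_i$ without changing the law, and finally split via the triangle inequality to land at $\bE F\le 2\,\bE_Z\bE_\sigma\sup_{g\in\mG}|\tfrac1n\sum_i\sigma_i g(Z_i)| = 2R_n(\mG)$, the last equality being the definition of $R_n(\mG)$ in the statement. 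Combining the two bounds yields the conclusion with the slightly sharper constant $C_\mG\sqrt{2\ln(1/\delta)/n}$; the looser $4C_\mG\sqrt{2\ln(4/\delta)/n}$ stated here follows either by treating the two one-sided suprema separately and union-bounding two applications of McDiarmid at level $\delta/2$, or simply by quoting \cite[Theorem~26.5]{shalev2014understanding}, and in either case it implies the displayed inequality. I expect no genuine obstacle beyond correctly justifying the symmetrization identity; every other step is routine.
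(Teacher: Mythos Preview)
Your proposal is correct and follows the standard McDiarmid-plus-symmetrization argument; note, however, that the paper does not supply its own proof of this lemma at all but simply cites it as \cite[Theorem~26.5]{shalev2014understanding}. So there is nothing to compare against beyond observing that your sketch is exactly the textbook proof underlying that citation.
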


Combining the bounds derived above leads to the following oracle inequality.  Recall the quantities  $\xi_i(n,\delta) ,i=1,2,3$ defined in \eqref{eq:xi1}, \eqref{eq:xi2} and \eqref{eq:xi3}.

\begin{theorem}\label{thm:oracle}
Let $u_n = \argmin_{u\in \mF} \mE_n(u)$. Let $\delta\in(0,\frac{1}{3})$ be fixed.  Assume that $\xi_i(n,\delta) < 1,i=1,2,3$.

Then  with probability at least $1-3\delta$,
\begin{equation}
    \begin{aligned}\label{eq:oracle}
\mE(u_n) - \mE(u^\ast) & \leq \frac{M_{\mG_2} \cdot \xi_1(n,d) + \xi_2(n,d)}{1 - \xi_1(n,d)} + \frac{M_{\mG_2} \cdot \xi_3(n,\delta) +\xi_2(n,\delta) }{1 - \xi_3(n,\delta)}\\
&  + \bigl( \inf_{u\in \mF} \mE(u) - \mE(u^\ast)\bigr) .
\end{aligned}
\end{equation}
\end{theorem}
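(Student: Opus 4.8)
\textbf{Proof proposal for Theorem \ref{thm:oracle}.}

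The plan is to assemble the oracle inequality directly from the error decomposition \eqref{eq:dE} together with the high-probability bounds on $T_1$ and $T_2$ already derived above. Recall that from \eqref{eq:dE} and the fact that $\mE_n(u_n)-\mE_n(u_\mF)\le 0$ (since $u_n$ minimizes $\mE_n$ over $\mF$), we have $\Delta\mE_n \le T_1 + T_2 + T_3$, where $T_3 = \mE(u_\mF)-\mE(u^\ast) = \inf_{u\in\mF}\mE(u) - \mE(u^\ast)$ is deterministic and already appears as the last term on the right-hand side of \eqref{eq:oracle}. So the entire content of the theorem reduces to controlling $T_1 + T_2$ with probability at least $1-3\delta$.

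The key step is a union-bound bookkeeping argument. From \eqref{eq:T1} we have, on the event $A_1(n,\delta)\cap A_2(n,\delta)$,
\[
T_1 \le \frac{M_{\mG_2}\cdot\xi_1(n,\delta) + \xi_2(n,\delta)}{1-\xi_1(n,\delta)},
\]
and from \eqref{eq:T2} we have, on the event $A_2(n,\delta)\cap A_3(n,\delta)$,
\[
T_2 \le \frac{\xi_2(n,\delta) + M_{\mG_2}\cdot\xi_3(n,\delta)}{1-\xi_3(n,\delta)}.
\]
Both bounds require $\xi_1(n,\delta)<1$ and $\xi_3(n,\delta)<1$ respectively for the denominators to be positive, which is exactly the hypothesis $\xi_i(n,\delta)<1$, $i=1,2,3$. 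Intersecting all three events $A_1\cap A_2\cap A_3$, both displayed bounds hold simultaneously; adding them to the deterministic bound on $T_3$ yields precisely \eqref{eq:oracle}.

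It remains to verify the probability. Each of $A_1(n,\delta)$, $A_2(n,\delta)$, $A_3(n,\delta)$ has probability at least $1-\delta$ (by \eqref{eq:e2n}, \eqref{eq:T1num}, and \eqref{eq:E2} respectively, the first two via Lemma \ref{lem:pacgen} applied to the symmetrized classes $\mG_1$ and $\mG_2$, the third via Lemma \ref{lem:hoef} since $u_\mF$ is nonrandom). By the union bound, $\bP[A_1\cap A_2\cap A_3] \ge 1 - 3\delta$, and on this event all the pieces above hold, giving \eqref{eq:oracle}. The only mild subtlety — not really an obstacle — is that Lemma \ref{lem:pacgen} requires the function class to be symmetric ($\mG = -\mG$); one should note that replacing $\mG_1$ and $\mG_2$ by $\mG_1 \cup (-\mG_1)$ and $\mG_2\cup(-\mG_2)$ leaves the supremum of the deviation $\bigl|\frac1n\sum g(Z_i) - \bE g\bigr|$ unchanged and only affects the Rademacher complexity by a harmless constant factor, so the constants $\xi_1,\xi_2$ as written are the operative ones. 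Everything else is routine arithmetic of combining the fractions.
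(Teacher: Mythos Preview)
Your proposal is correct and follows exactly the approach of the paper: the theorem is obtained by combining the decomposition \eqref{eq:dE} with the high-probability bounds \eqref{eq:T1} and \eqref{eq:T2} on the event $A_1\cap A_2\cap A_3$, whose probability is at least $1-3\delta$ by the union bound. Your added remark about symmetrizing $\mG_1,\mG_2$ for Lemma~\ref{lem:pacgen} is a reasonable clarification but not part of the paper's argument.
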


\subsection{Bounding the Approximation Error}\label{sec:appro}
  Recall the spectral Barron space $\mB^s(\Omega)$ defined in \eqref{eq:barrons} and the set of two-layer networks $\mF_{\sp_\tau,m}(B)$ defined by setting $\phi = \sp_\tau$ in \eqref{eq:fphim}.  
The following theorem bounds  the approximation error (the third term) in \eqref{eq:oracle} when $u^\ast \in \mB(\Omega)$ and $\mF = \mF_{\sp_\tau,m}$.
 
 \begin{theorem}\label{thm:energydiff}
Let the ground state $u^\ast \in \mB(\Omega)$ with $\|u^\ast\|_{L^2(\Omega)}=1$. Let  $u_m \in \mF_{\sp_\tau,m}(\|u\|_{\mB(\Omega)})$ be defined in  Lemma \ref{lem:appro}. Assume that $V$ satisfies Assumption \ref{ass:v}. Assume in addition that 
\begin{equation}\label{eq:eta}
    \eta(\norm{u^{\ast}}_{\mB(\Omega)}, m) : = \frac{\|u^\ast\|_{\mB(\Omega)}\cdot(6\ln m+30)}{\sqrt{m}} \leq \frac{1}{2}.
\end{equation}
Then 
\begin{equation}
    \mE(u_m) - \mE(u^\ast) \leq \left(2 (1 + V_{\max}) \Big(\sqrt{\frac{\lambda^\ast}{\min(1, V_{\min})}} + 1\Big) + 3\lambda^\ast\right)  \eta(\norm{u^\ast}_{\mB(\Omega)},m)
\end{equation}
 \end{theorem}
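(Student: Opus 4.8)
The starting point is the observation that, because $\|u^\ast\|_{L^2(\Omega)}=1$, we have $\mE_2(u^\ast)=1$ and $\mE_V(u^\ast)=\mE(u^\ast)=\lambda^\ast$. Introducing the symmetric bilinear form $a(f,g):=\int_\Omega \nabla f\cdot\nabla g + V f g$ associated with $\mH$, so that $\mE_V(u)=a(u,u)$, and adding and subtracting $\lambda^\ast$ in the numerator, one gets the identity
$$
\mE(u_m)-\mE(u^\ast)=\frac{\bigl(a(u_m,u_m)-a(u^\ast,u^\ast)\bigr)+\lambda^\ast\bigl(\mE_2(u^\ast)-\mE_2(u_m)\bigr)}{\mE_2(u_m)}.
$$
The plan is to bound each of the two numerator terms by a constant times $\eta:=\eta(\|u^\ast\|_{\mB(\Omega)},m)$, to bound the denominator from below by a positive universal constant, and then to combine; every ingredient will come from the approximation bound $\|u^\ast-u_m\|_{H^1(\Omega)}\le\eta$ of Lemma \ref{lem:appro} together with Assumption \ref{ass:v}.

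First I would record an a priori $H^1$ bound on the ground state in terms of $\lambda^\ast$ alone: since $\lambda^\ast=a(u^\ast,u^\ast)=\|\nabla u^\ast\|_{L^2(\Omega)}^2+\int_\Omega V|u^\ast|^2\ge \min(1,V_{\min})\,\|u^\ast\|_{H^1(\Omega)}^2$, we obtain $\|u^\ast\|_{H^1(\Omega)}\le\sqrt{\lambda^\ast/\min(1,V_{\min})}$. For the first numerator term I would factor $a(u_m,u_m)-a(u^\ast,u^\ast)=a(u_m-u^\ast,u_m+u^\ast)$, use the continuity estimate $|a(f,g)|\le (1+V_{\max})\|f\|_{H^1(\Omega)}\|g\|_{H^1(\Omega)}$ (immediate from $V\le V_{\max}$ and Cauchy--Schwarz), then bound $\|u_m-u^\ast\|_{H^1(\Omega)}\le\eta$ by Lemma \ref{lem:appro} and $\|u_m+u^\ast\|_{H^1(\Omega)}\le \eta+2\|u^\ast\|_{H^1(\Omega)}$ by the triangle inequality; combined with the a priori bound and $\eta\le\tfrac12$ this produces the contribution $2(1+V_{\max})\bigl(\sqrt{\lambda^\ast/\min(1,V_{\min})}+1\bigr)\eta$. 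For the second numerator term, the same factorization at the $L^2$ level gives $|\mE_2(u^\ast)-\mE_2(u_m)|=|\langle u^\ast-u_m,\,u^\ast+u_m\rangle|\le \|u^\ast-u_m\|_{L^2(\Omega)}\,\|u^\ast+u_m\|_{L^2(\Omega)}\le \eta(\eta+2)$, which contributes $3\lambda^\ast\eta$ after using $\eta\le\tfrac12$. Finally, for the denominator, $\mE_2(u_m)=\|u_m\|_{L^2(\Omega)}^2\ge\bigl(\|u^\ast\|_{L^2(\Omega)}-\|u^\ast-u_m\|_{L^2(\Omega)}\bigr)^2\ge(1-\eta)^2$, which is bounded below by a positive constant thanks to the standing hypothesis $\eta\le\tfrac12$ in \eqref{eq:eta}. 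Assembling the three estimates and simplifying with $\eta\le\tfrac12$ then yields an inequality of the stated form.

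The argument is essentially bookkeeping and I do not anticipate a genuine obstacle. The one point that really requires attention is that the network $u_m$ delivered by Lemma \ref{lem:appro} is not normalized in $L^2$: this is precisely why one must carry the denominator $\mE_2(u_m)$ through the computation and also keep the $L^2$-mismatch $\mE_2(u^\ast)-\mE_2(u_m)$ in the numerator --- if one could take $\|u_m\|_{L^2(\Omega)}=1$, the identity above would collapse to $\mE(u_m)-\mE(u^\ast)=a(u_m-u^\ast,u_m+u^\ast)$ and the first estimate alone would finish the proof. A lesser, purely cosmetic, issue is arranging the various constants --- in particular the passage from $\eta(\eta+2)$ to $3\eta$ and the treatment of the lower bound on $\mE_2(u_m)$ --- so that, after invoking $\eta\le\tfrac12$, the final inequality appears in precisely the asserted shape.
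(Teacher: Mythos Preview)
Your proposal is correct and follows essentially the same route as the paper: the same additive decomposition of $\mE(u_m)-\mE(u^\ast)$ into a $\mE_V$-difference and a $\mE_2$-difference over the common denominator $\mE_2(u_m)$, the same a priori bound $\|u^\ast\|_{H^1}^2\le \lambda^\ast/\min(1,V_{\min})$, and the same use of Lemma~\ref{lem:appro} together with $\eta\le\tfrac12$ to control each piece. Your bilinear factorization $a(u_m,u_m)-a(u^\ast,u^\ast)=a(u_m-u^\ast,u_m+u^\ast)$ is just a slightly cleaner way of writing what the paper does with $|\mE_V(u_m)-\mE_V(u^\ast)|\le(1+V_{\max})(\|u^\ast\|_{H^1}+\|u_m\|_{H^1})\|u^\ast-u_m\|_{H^1}$, and your remark about the cosmetic handling of the denominator $(1-\eta)^2$ mirrors the paper's equally informal ``substituting the last two estimates'' at the end.
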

 
 \begin{proof}
 By assumption $\mE_2(u^\ast) = \|u^\ast\|^2_{L^2(\Omega)}=1$. Then $\mE(u^\ast) = \mE_V(u^\ast) = \lambda^\ast$. Since $V\geq V_{\min}>0$, this implies that 
 \begin{equation}\label{eq:ustarh1}
 \|u^\ast\|_{H^1(\Omega)}^2 \leq \frac{\mE_V(u^\ast)}{\min(1, V_{\min})}  =  \frac{\lambda^\ast}{\min(1, V_{\min})} .
 \end{equation}
 Now observe that
 \begin{equation}\label{eq:diffe}
 \begin{aligned}
 \mE(u_m) - \mE(u^\ast) & = \frac{\mE_V(u_m)}{\mE_2(u_m)} - 
 \frac{\mE_V(u^\ast)}{\mE_2(u^\ast)}\\
 & = \frac{\mE_V(u_m ) - \mE_V(u^\ast)}{\mE_2(u_m)} + \frac{\mE_2(u^\ast)-
 \mE_2(u_m)}{\mE_2(u_m)}\cdot  \mE(u^\ast).
 \end{aligned}
 \end{equation}
Thanks to Lemma \ref{lem:appro}, $\|u_m - u^\ast\|_{H^1(\Omega)} \leq \eta(\norm{u^\ast}_{\mB(\Omega)},m) < 1$. This implies  that 
$$1-\eta(\norm{u^\ast}_{\mB(\Omega)},m)\leq \|u_m\|_{L^2(\Omega)}  \leq 1+\eta(\norm{u^\ast}_{\mB(\Omega)},m)$$ and that 
$$\begin{aligned}
|\mE_2(u_m) - \mE_2(u^\ast) |& = (\|u^\ast\|_{L^2(\Omega)} + \|u_m\|_{L^2(\Omega)}) | \|u^\ast\|_{L^2(\Omega)} - \|u_m\|_{L^2(\Omega)}|\\
& \leq (2 + \eta(\norm{u^\ast}_{\mB(\Omega)},m)) \eta(\norm{u^\ast}_{\mB(\Omega)},m)\\
& \leq 3 \eta(\norm{u^\ast}_{\mB(\Omega)},m).
\end{aligned}$$
  In addition, it follows from the boundedness of $V$ that
 $$\begin{aligned}
| \mE_V(u_m ) - \mE_V(u^\ast)| & \leq  (1 + V_{\max}) (\|u^\ast\|_{H^1(\Omega)} + \|u_m\|_{H^1(\Omega)}) \|u^\ast - u_m\|_{H^1(\Omega)}\\
& \leq  (1 + V_{\max}) (2\|u^\ast\|_{H^1(\Omega)} +\eta(\norm{u^\ast}_{\mB(\Omega)},m) ) \eta(\norm{u^\ast}_{\mB(\Omega)},m)\\
& \leq  (1 + V_{\max}) \Big(\sqrt{\frac{\lambda^\ast}{\min(1, V_{\min})}} + 1\Big) \eta(\norm{u^\ast}_{\mB(\Omega)},m) ,
 \end{aligned}$$
 where we have used \eqref{eq:ustarh1} in the last inequality. Finally, the estimate  follows simply by substituting the last two estimates into \eqref{eq:diffe}. 
 \end{proof}

\subsection{Bounding the Statistical Error}\label{sec:stats}
In this section we proceed to bound the statistical errors (the first two terms on the right side of \eqref{eq:oracle}). This is achieved by controlling the Rademacher complexities of  the function classes $\mG_1$ and $\mG_2$ defined in \eqref{eq:G12}. More specifically, since we set the trial functions $\mF = \mF_{\sp_\tau,m}$, we need to bound the Rademacher complexities of the following 
\begin{equation}
    \begin{aligned}\label{eq:G12}
    \mG_{\sp_\tau,m,1}(B) & := \big\{g = u^2: u\in \mF_{\sp_\tau,m}(B)\big\},\\
    \mG_{\sp_\tau,m,2}(B) & := \big\{g = |\nabla u|^2 + V |u|^2: u\in \mF_{\sp_\tau,m}(B)\big\}.
    \end{aligned}
\end{equation}
\begin{theorem}\label{thm:rnGsp}
Assume that  $\|V\|_{L^\infty(\Omega)}\leq V_{\max}$.   consider the sets  $\mG_{\sp_{\tau},m,1}(B)$ and  $\mG_{\sp_{\tau},m,2}(B)$ with $\tau = \sqrt{m}$ and $B>0$. Then there exist positive constants $C_{1}(B,d,F)$ and $C_{2}(B,d,V_{\max})$ depending polynomially on $B,d,V_{\max}$  such that
\begin{align}
R_n(\mG_{\sp_{\tau},m,1}(B)) & \leq  \frac{C_{1}(B,d) \sqrt{m}(\sqrt{\ln m}+1)}{\sqrt{n}},\label{eq:mgp}\\
R_n(\mG_{\sp_{\tau},m,2}(B)) & \leq  \frac{C_{2}(B,d,V_{\max}) \sqrt{m}(\sqrt{\ln m}+1)}{\sqrt{n}}.\label{eq:mgs}
\end{align}
\end{theorem}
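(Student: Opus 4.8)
\medskip
\noindent\textbf{Proof idea.} Throughout write $A\lesssim B$ for $A\le cB$ with $c$ an absolute constant. The plan is to reduce both Rademacher complexities, via Talagrand's contraction lemma, to the Rademacher complexities of the \emph{single-neuron} classes underlying $\mF_{\sp_\tau,m}(B)$ and its partial derivatives, and then to estimate those directly.

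First I would record uniform $W^{1,\infty}$ bounds on the trial class. Since $|w_i|_1=1$ and $\Omega=[0,1]^d$ we have $w_i\cdot x-t_i\in[-2,2]$, and from $\sp_\tau(z)\le\max(z,0)+\tfrac{\ln 2}{\tau}$, $0<\sp_\tau'(z)=(1+e^{-\tau z})^{-1}<1$, $\sum_i|\gamma_i|\le 4B$, $|c|\le 2B$ and $\tau=\sqrt m\ge 1$, one gets $\sup_{u\in\mF_{\sp_\tau,m}(B)}\|u\|_{L^\infty(\Omega)}\le c_0B=:M_\mF$ with $c_0$ absolute, and $\sup_{u}\||\nabla u|\|_{L^\infty(\Omega)}\le 4B$ (using $|w_i|_2\le|w_i|_1$). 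Hence $\sup_{g\in\mG_{\sp_\tau,m,1}(B)}\|g\|_{L^\infty}\le M_\mF^2$ and $\sup_{g\in\mG_{\sp_\tau,m,2}(B)}\|g\|_{L^\infty}\le 16B^2+V_{\max}M_\mF^2$; these quantities enter the final bound only through the constants.

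Next I would peel off the outer squares. Writing $|\nabla u|^2=\sum_{k=1}^d(\partial_k u)^2$ and using monotonicity and subadditivity of Rademacher complexity,
$$
R_n(\mG_{\sp_\tau,m,2}(B))\le \sum_{k=1}^d R_n\bigl(\{(\partial_k u)^2:u\in\mF\}\bigr)+R_n\bigl(\{V u^2:u\in\mF\}\bigr).
$$
Since $y\mapsto y^2$ is $2M$-Lipschitz on $[-M,M]$ and vanishes at $0$, the contraction lemma — in its per-sample form, which also absorbs the fixed bounded multiplier $V(X_j)\le V_{\max}$ — gives $R_n(\{(\partial_k u)^2\})\lesssim B\,R_n(\{\partial_k u:u\in\mF\})$, $R_n(\{Vu^2\})\lesssim V_{\max}M_\mF\,R_n(\mF_{\sp_\tau,m}(B))$, and likewise $R_n(\mG_{\sp_\tau,m,1}(B))\lesssim M_\mF\,R_n(\mF_{\sp_\tau,m}(B))$. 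It remains to bound $R_n$ of $\mF_{\sp_\tau,m}(B)$ and of the $d$ gradient classes $\{\partial_k u:u\in\mF\}$. For these I would collapse the neuron sum: because $\tfrac1n\sum_j\sigma_j u(X_j)=c\,\tfrac1n\sum_j\sigma_j+\sum_i\gamma_i\,\tfrac1n\sum_j\sigma_j\sp_\tau(w_i\cdot X_j-t_i)$, taking the supremum over the $\ell^1$-ball $\sum_i|\gamma_i|\le 4B$ (jointly with the individual $w_i,t_i$) collapses the sum to a single supremum over one neuron — with no factor of $m$ — and likewise $\partial_k u(X_j)=\sum_i\gamma_i(w_i)_k\,\sp_\tau'(w_i\cdot X_j-t_i)$ with $\sum_i|\gamma_i(w_i)_k|\le 4B$. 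Everything thus reduces to bounding $\E_\sigma\sup_{|w|_1\le1,|t|\le1}\bigl|\tfrac1n\sum_j\sigma_j\psi(w\cdot X_j-t)\bigr|$ for $\psi=\sp_\tau$ and $\psi=\sp_\tau'$.

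For $\psi=\sp_\tau$, which is $1$-Lipschitz, one more contraction reduces this to the Rademacher complexity of the affine class $\{x\mapsto w\cdot x-t:|w|_1\le1,|t|\le1\}$, which is $O\bigl(\sqrt{\ln d/n}\bigr)$ by $\ell^1/\ell^\infty$ duality and a maximal inequality over the $d$ coordinates; this already yields \eqref{eq:mgp} (the factor $\sqrt m(\sqrt{\ln m}+1)\ge 1$ there being a harmless over-estimate). For $\psi=\sp_\tau'$, which is only $(\tau/4)=(\sqrt m/4)$-Lipschitz, I would combine a contraction step with an $L^\infty$-covering-number bound for the affine class at scale $\varepsilon/\tau$ fed into Dudley's entropy integral: the relevant covering number is $(C\tau/\varepsilon)^{O(d)}$, so that $\int_0^1\sqrt{d\ln(C\tau/\varepsilon)}\,d\varepsilon\lesssim\sqrt d\,(\sqrt{\ln\tau}+1)$, and with $\tau=\sqrt m$ this produces the $\sqrt m(\sqrt{\ln m}+1)$ growth. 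Summing over $k=1,\dots,d$ and combining with the $Vu^2$ term gives \eqref{eq:mgs}, all constants being polynomial in $B$, $d$, $V_{\max}$. The main obstacle is precisely this gradient class: differentiating the rescaled soft-plus produces a sigmoid of slope $\Theta(\tau)=\Theta(\sqrt m)$, so naive contraction is lossy by a factor $\sqrt m$, and one must exploit both the boundedness of the sigmoid and the $\ell^1$-geometry of the input weights in order to keep the dimension dependence polynomial while tracking the correct rate; the value class $\mG_{\sp_\tau,m,1}(B)$ and the potential term are comparatively routine.
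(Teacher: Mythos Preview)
Your approach is correct and genuinely different from the paper's, and in fact it yields a sharper bound than the one stated.

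The paper proves the theorem by applying Dudley's entropy integral \emph{directly} to the composite classes $\mG_{\sp_\tau,m,i}(B)$: the $L^\infty$ covering numbers are bounded (via \cite[Lemmas 5.5, 5.7]{lu2021priori}) by expressions of the form $(C\Lambda/\varepsilon)^{O(md)}$, the exponent $O(md)$ reflecting the $O(md)$ parameters of an $m$-neuron network. Integrating $\sqrt{md\ln(\cdot)}$ is precisely what produces the leading $\sqrt{m}$ in the final estimate. You instead peel off the outer squares by Talagrand contraction and then exploit the $\ell^1$ constraint $\sum_i|\gamma_i|\le 4B$ (together with $|(w_i)_k|\le|w_i|_1=1$) to collapse the $m$-term sum to a supremum over a \emph{single} neuron; only a $(d{+}1)$-dimensional parameter space then enters the covering-number bound. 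This reduction is valid and eliminates the dependence on $m$ in the exponent.

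There is one slip. Your Dudley calculation correctly gives
\[
\int_0^1\sqrt{d\ln(C\tau/\varepsilon)}\,d\varepsilon\ \lesssim\ \sqrt d\,(\sqrt{\ln\tau}+1),
\]
but with $\tau=\sqrt m$ this is $\sqrt d\,(\sqrt{\ln m}+1)$, \emph{not} $\sqrt m\,(\sqrt{\ln m}+1)$ as you then assert. In other words, your argument actually delivers
\[
R_n(\mG_{\sp_\tau,m,2}(B))\ \le\ \frac{C(B,d,V_{\max})\,(\sqrt{\ln m}+1)}{\sqrt n}
\]
with no leading $\sqrt m$, and similarly $R_n(\mG_{\sp_\tau,m,1}(B))\le C(B,d)/\sqrt n$. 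The stated theorem follows a fortiori, since $(\sqrt{\ln m}+1)\le\sqrt m(\sqrt{\ln m}+1)$. What your route buys is a strictly better statistical error; what the paper's route buys is directness (no contraction, no neuron-collapse), at the price of the extra $\sqrt m$ coming from parameter counting.
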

To prove Theorem \ref{thm:rnGsp}, we rely on the celebrated Dudley's theorem \cite{dudley1967sizes}, which bounds the Rademacher complexities in terms of the metric entropy. Below we restate the Dudley's theorem by following  \cite[Theorem 1.19]{wolfnotes}.

\begin{theorem}[{Dudley's theorem {\cite[Theorem 1.19]{wolfnotes}}}] \label{thm:dudley} Let $\mF$ be a function class such that $\sup_{f\in \mF}\|f\|_{\infty}\leq M$. Then the Rademacher complexity $R_n(\mF)$ satisfies that 
\begin{equation}\label{eq:dudley}
    R_n(\mF) \leq \inf_{0\leq \delta\leq M} \Big\{4\delta + \frac{12}{\sqrt{n}}\int_\delta^M \sqrt{\ln \mN(\eps,\mF, \|\cdot\|_\infty)} \,d\eps\Big\},   
\end{equation}
where $\mN(\eps,\mF, \|\cdot\|_\infty)$ denotes the $\eps$-covering number of $\mF$ w.r.t the $L_\infty$-norm.
\end{theorem}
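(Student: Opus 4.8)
Theorem~\ref{thm:dudley} is the classical Dudley entropy-integral bound, and the plan is to prove it by the standard \emph{chaining} argument. First I would condition on the sample $Z_1,\dots,Z_n$ and write $L_\sigma(f):=\frac1n\sum_{j=1}^n\sigma_j f(Z_j)$, so that $R_n(\mF)=\bE_{\mP_\Omega}\bE_\sigma\sup_{f\in\mF}|L_\sigma(f)|$ and it suffices to bound the inner conditional expectation. The key observation is that $f\mapsto L_\sigma(f)$ is linear, so $L_\sigma(f)-L_\sigma(g)=L_\sigma(f-g)$, and a one-line Hoeffding-type computation (using $\bE_\sigma e^{\lambda\sigma_j t}=\cosh(\lambda t)\le e^{\lambda^2t^2/2}$) shows that for any fixed $g$ the random variable $L_\sigma(g)$ is centered sub-Gaussian with variance proxy $\frac1{n^2}\sum_{j=1}^n g(Z_j)^2\le\frac1n\|g\|_\infty^2$. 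Hence $\{L_\sigma(f)\}_{f\in\mF}$ is a sub-Gaussian process whose increments are controlled by $\frac1{\sqrt n}\|f-g\|_\infty$, and moreover $|L_\sigma(g)|\le\frac1n\sum_j|g(Z_j)|\le\|g\|_\infty$ deterministically; both facts are used below.

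Next I would set up the multiscale nets. Fix $\delta\in(0,M]$, let $\eps_k=2^{1-k}M$ for $k\ge0$ (so $\eps_0=2M$), and let $K$ be the smallest index with $\eps_K\le\delta$, so $\delta/2<\eps_K\le\delta$. For $1\le k\le K$ let $\mF_k\subset\mF$ be a minimal $\eps_k$-net of $\mF$ in $\|\cdot\|_\infty$, with $|\mF_k|=\mN(\eps_k,\mF,\|\cdot\|_\infty)$ and nearest-point projection $\pi_k\colon\mF\to\mF_k$, and let $\pi_0\equiv f_0$ for a fixed $f_0\in\mF$ (a $2M$-net). For every $f\in\mF$, telescoping gives
$$L_\sigma(f)=L_\sigma(f_0)+\sum_{k=1}^{K}L_\sigma\bigl(\pi_k(f)-\pi_{k-1}(f)\bigr)+L_\sigma\bigl(f-\pi_K(f)\bigr).$$
The last term is at most $\|f-\pi_K(f)\|_\infty\le\eps_K\le\delta$ deterministically; the term $L_\sigma(f_0)$ contributes at most $M/\sqrt n$ in $\sigma$-expectation by Cauchy--Schwarz. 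For $1\le k\le K$, as $f$ ranges over $\mF$ the difference $\pi_k(f)-\pi_{k-1}(f)$ takes at most $|\mF_k|\,|\mF_{k-1}|\le\mN(\eps_k)^2$ distinct values, each of $\|\cdot\|_\infty$-norm at most $\eps_k+\eps_{k-1}=3\eps_k$ and hence sub-Gaussian with parameter $\le 3\eps_k/\sqrt n$; the sub-Gaussian maximal inequality $\bE\max_{i\le N}|X_i|\le\sigma\sqrt{2\ln(2N)}$ then yields
$$\bE_\sigma\sup_{f\in\mF}\bigl|L_\sigma(\pi_k(f)-\pi_{k-1}(f))\bigr|\le\frac{3\eps_k}{\sqrt n}\sqrt{2\ln\bigl(2\mN(\eps_k)^2\bigr)}\le\frac{C\eps_k}{\sqrt n}\sqrt{\ln\mN(\eps_k)+1}.$$

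Finally I would sum over $k=1,\dots,K$, add the two boundary terms, and convert the sum into the entropy integral: since $\eps\mapsto\mN(\eps,\mF,\|\cdot\|_\infty)$ is non-increasing and $\eps_k-\eps_{k+1}=\eps_k/2$, one has $\eps_k\sqrt{\ln\mN(\eps_k)+1}\le2\int_{\eps_{k+1}}^{\eps_k}\sqrt{\ln\mN(\eps)+1}\,d\eps$, so the sum telescopes to at most $2\int_{\delta/4}^{M}\sqrt{\ln\mN(\eps)+1}\,d\eps$, which, up to absorbing the harmless ``$+1$'' on the range where $\mN\ge2$, is the integral in \eqref{eq:dudley}. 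Since the right-hand side involves only the (deterministic) $L^\infty$-covering numbers of $\mF$ on $\Omega$, taking $\bE_{\mP_\Omega}$ leaves it unchanged, and taking the infimum over $\delta\in[0,M]$ yields \eqref{eq:dudley}. I expect the only real work, beyond this textbook chaining, to be the constant bookkeeping: handling the coarsest scale (absorbing the $M/\sqrt n$ singleton term and the mismatch between $\int_{\delta/4}^M$ and $\int_\delta^M$ into the leading $4\delta$) and using $\ln(2\mN^2)\le c\ln\mN$ where $\mN\ge2$ to land precisely on the constants $4$ and $12$; none of this requires any structure on $\mF$ beyond the uniform bound $\sup_{f\in\mF}\|f\|_\infty\le M$.
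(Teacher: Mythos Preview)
The paper does not supply its own proof of Theorem~\ref{thm:dudley}; it merely restates Dudley's entropy-integral bound and cites \cite[Theorem 1.19]{wolfnotes} for the proof. Your chaining argument is the standard textbook route to this inequality and is essentially correct, so there is nothing substantive to compare. If anything, the only caution is in the final constant bookkeeping (absorbing the $M/\sqrt{n}$ singleton term, the ``$+1$'' in $\sqrt{\ln\mN+1}$, and the shift from $\int_{\delta/4}^M$ to $\int_\delta^M$ into the prefactors $4$ and $12$), which can be made to work but requires a bit more care than you indicate; since the paper treats the theorem as a black box, this level of detail is already more than the paper itself provides.
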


In order to apply Dudley's theorem, we need to
 bound the $\delta$-covering numbers of the sets $\mG_{\sp_\tau,m,i}(B),i=1,2$. To this end, it will be convenient to introduce the following functions 
\begin{align}
    \mM (\delta,\Lambda,m, d) &:= \frac{4B\Lambda}{\delta}\cdot \Big(\frac{12B\Lambda}{\delta}\Big)^m  \cdot\Big(\frac{3\Lambda}{\delta}\Big)^{dm}\cdot  \Big(\frac{3\Lambda}{\delta}\Big)^m, \label{eq:M}\\
    \mZ (M,\Lambda,d) &:=M\big(\sqrt{(\ln(4B\Lambda))_+} + \sqrt{(\ln(12B \Lambda) + d\ln(3\Lambda)+ \ln(3\Lambda))_+}\big)\label{eq:Z} \\
    &  \qquad + \sqrt{d+3} \int_0^{M} \sqrt{(\ln(1/\eps))_+}d\eps  \nonumber. 
\end{align}

\begin{lemma}\label{lem:bdNg}
Consider the two sets $ \mG_{\sp_\tau,m,i}(B),i=1,2$ defined in  \eqref{eq:G12} with $B>0$. Then  for any $\delta >0$, 
$$
\begin{aligned}
\mN(\delta,  \mG_{\sp_\tau,m,i}(B), \|\cdot\|_{\infty}) & \leq \mM (\delta,\Lambda_i,m, d), \quad i=1,2, 
 \end{aligned}
$$
where the constants $\Lambda_i,i=1,2$ satisfy  that 
\begin{equation}\label{eq:Lambda12}
\Lambda_1 \leq 36B (5 + 8B),  \quad 
\Lambda_2 \leq 64B^2\sqrt{m} + 8B +  36V_{\max}B (5 + 8B).
\end{equation}
\end{lemma}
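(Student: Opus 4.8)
The plan is to reduce the covering-number bound to discretizing the finite-dimensional parameter space of $\mF_{\sp_\tau,m}(B)$ and transporting the resulting net through Lipschitz estimates for the maps $\theta\mapsto u^2$ and $\theta\mapsto |\nabla u|^2+V|u|^2$, where $\theta=(c,\gamma,\{w_i\},\{t_i\})$ ranges over the parameter domain given by $c\in[-2B,2B]$, the coefficient vector $\gamma=(\gamma_i)$ in the $\ell^1$-ball of radius $4B$ in $\R^m$, the weights $w_i$ on the $\ell^1$-unit sphere in $\R^d$, and the shifts $t_i\in[-1,1]$.

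First I would collect elementary bounds on the activation and on the network class. From $\sp_\tau(z)=\tfrac1\tau\ln(1+e^{\tau z})$ one gets $0\le \sp_\tau(z)\le z_+ + \tfrac{\ln 2}{\tau}$, $0<\sp_\tau'(z)<1$, and $|\sp_\tau''(z)|\le \tfrac\tau4$; with $\tau=\sqrt m\ge 1$ and $\Omega=[0,1]^d$, the constraints $|w_i|_1=1$ and $|t_i|\le 1$ give $|w_i\cdot x-t_i|\le 2$, so $\sp_\tau(w_i\cdot x-t_i)\le 3$ pointwise, $\sp_\tau$ is $1$-Lipschitz, and $\sp_\tau'$ is $\tfrac{\sqrt m}{4}$-Lipschitz along the relevant lines. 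Hence every $u\in\mF_{\sp_\tau,m}(B)$ satisfies $\norm{u}_\infty \le 2B+12B$ and $\norm{\,|\nabla u|\,}_\infty\le \sum_i|\gamma_i|\,|w_i|_2\le 4B$, and in particular $\norm{g}_\infty\lesssim B^2(1+V_{\max})$ uniformly over $\mG_{\sp_\tau,m,i}(B)$.

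Next I would establish the key parameter-Lipschitz estimates. Given $\theta,\tilde\theta$ with $|c-\tilde c|\le\eps$, $|\gamma-\tilde\gamma|_1\le\eps$, $\max_i|w_i-\tilde w_i|_1\le\eps$, and $\max_i|t_i-\tilde t_i|\le\eps$, a term-by-term triangle inequality on $\gamma_i\sp_\tau(w_i\cdot x-t_i)-\tilde\gamma_i\sp_\tau(\tilde w_i\cdot x-\tilde t_i)$, splitting each summand into a coefficient difference and an argument difference and using $\sum_i|\gamma_i|\le 4B$ together with the $1$-Lipschitz bound, gives $\norm{u-\tilde u}_\infty\le(4+8B)\eps$; multiplying by $\norm{u}_\infty+\norm{\tilde u}_\infty$ yields $\norm{u^2-\tilde u^2}_\infty\le\Lambda_1\eps$ with $\Lambda_1\le 36B(5+8B)$. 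The same scheme applied to $\nabla u=\sum_i\gamma_i\sp_\tau'(w_i\cdot x-t_i)w_i$ now invokes the $\tfrac{\sqrt m}{4}$-Lipschitz bound on $\sp_\tau'$ — this is precisely where the factor $\sqrt m$ enters — giving $\norm{\nabla u-\nabla\tilde u}_\infty\lesssim(1+B\sqrt m+B)\eps$ and then $\norm{\,|\nabla u|^2-|\nabla\tilde u|^2\,}_\infty\lesssim B^2\sqrt m\,\eps$; adding $V_{\max}\norm{u^2-\tilde u^2}_\infty$ for the (fixed) potential term and collecting constants gives $\norm{g-\tilde g}_\infty\le\Lambda_2\eps$ with $\Lambda_2\le 64B^2\sqrt m+8B+36V_{\max}B(5+8B)$.

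Finally I would build a product net on the parameter domain at scale $\eps$: the interval $[-2B,2B]$ needs at most $4B/\eps$ points; the $\ell^1$-ball of radius $4B$ in $\R^m$ needs at most $(12B/\eps)^m$ points by the standard volumetric bound for $\ell^1$-balls (valid for $\eps\le 4B$); each $w_i$ lies in the $\ell^1$-ball of radius $1$ in $\R^d$, needing at most $(3/\eps)^d$ points, hence $(3/\eps)^{dm}$ over all $m$ neurons; and each $t_i\in[-1,1]$ needs at most $3/\eps$ points, hence $(3/\eps)^m$ over all neurons. Pushing this net forward under $\theta\mapsto g$ and choosing $\eps=\delta/\Lambda_i$, the Lipschitz estimates show the image is a $\delta$-net of $\mG_{\sp_\tau,m,i}(B)$ in $\norm{\cdot}_\infty$, of cardinality at most
\[
\frac{4B\Lambda_i}{\delta}\cdot\Big(\frac{12B\Lambda_i}{\delta}\Big)^m\cdot\Big(\frac{3\Lambda_i}{\delta}\Big)^{dm}\cdot\Big(\frac{3\Lambda_i}{\delta}\Big)^m=\mM(\delta,\Lambda_i,m,d).
\]
The only real work is the bookkeeping of the Lipschitz constants — isolating the single $\sqrt m$ factor from $\sp_\tau''$ in $\Lambda_2$ and checking that the numerical constants land inside \eqref{eq:Lambda12}; the net-counting step is routine.
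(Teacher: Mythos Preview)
Your proposal is correct and follows exactly the standard route that the paper relies on: the paper's own ``proof'' simply cites \cite[Lemma 5.5 and Lemma 5.7]{lu2021priori} and says the constants need minor adjustment, and those lemmas are proved precisely by discretizing the parameter domain $(c,\gamma,\{w_i\},\{t_i\})$, establishing $L^\infty$-Lipschitz estimates for $\theta\mapsto u^2$ and $\theta\mapsto |\nabla u|^2+V|u|^2$ (with the $\sqrt m$ entering through $\sp_\tau''$), and pushing the product net forward at scale $\eps=\delta/\Lambda_i$. Your constant bookkeeping lands safely inside the stated bounds \eqref{eq:Lambda12}, so nothing further is needed.
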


\begin{proof}
The proof follows directly from  \cite[Lemma 5.5]{lu2021priori} and  \cite[Lemma 5.7]{lu2021priori} by slightly adjusting the constants. We thus omit the details. 
\end{proof}

\begin{proof}[Proof of Theorem \ref{thm:rnGsp}]
First from the definition of $\mF_{\sp_\tau,m}(B)$ and the fact  that $
\|\sp_{\tau}\|_{W^{1,\infty} (\Omega)} \leq 3 + \frac{1}{\tau}
$ (see \cite[Lemma 4.6]{lu2021priori}),
one has the following uniform bound 
$$
 \sup_{u\in \mF_{\sp_\tau,m}(B)} \|u\|_{W^{1,\infty}(\Omega)} \leq 2B + 4B \|\sp_{\tau}\|_{W^{1,\infty}   (\Omega)}  \leq 16B.
$$
This implies that 
$$\begin{aligned}
\sup_{f\in \mF_{\sp_\tau,m}(B)} \|f\|_{L^\infty(\Omega)} & \leq 16B = M_{\mF},\\
\sup_{g\in \mG_{\sp_\tau,m,1}(B)} \|g\|_{L^\infty(\Omega)} &\leq (16B)^2 =: M_1, \\ \sup_{g\in \mG_{\sp_\tau,m,2}(B)} \|g\|_{L^\infty(\Omega)} &\leq (1+ V_{\max})(16B)^2 =: M_2.
\end{aligned}$$
Applying Theorem \ref{thm:dudley} with $ \delta = 0$, we obtain from  Lemma \ref{lem:bdNg} and the estimates in the last line that 
$$
R_n(\mG_{\sp_\tau,m,i}(B)) \leq \mZ (M_i,\Lambda_i,d) \sqrt{\frac{m}{n}}.
$$
Moreover, after plugging the bounds on $\Lambda_i$ (see \eqref{eq:Lambda12}) into $\mZ (M_i,\Lambda_i,d)$ , it is easy to see that
$$\begin{aligned}
\mZ (M_1,\Lambda_1,d) & \leq C_1(B,d),\\
 \mZ (M_1,\Lambda_1,d) & \leq C_2(B,d, V_{\max}) \sqrt{\ln m},
\end{aligned}$$
where the positive constants $C_1(B,d)$. and $C_2(B,d, V_{\max})$ depend on $B$ and $d$ polynomially. Combining the estimates finishes the proof of Theorem \ref{thm:rnGsp}.
\end{proof}


\subsection{Proof of Theorem \ref{thm:gen1}}
The proof follows directly by combining Theorem \ref{thm:oracle}, Theorem \ref{thm:energydiff} and Theorem \ref{thm:rnGsp}.

\section{Regularity of the Ground State of Schr\"odinger Operator in the spectral Barron Space (Proof of Theorem \ref{thm:reg})}\label{sec:reg}
In this section we aim to prove the regularity  of the ground state $u^\ast$ in the spectral Barron space  as shown in Theorem \ref{thm:reg}. Since our proof relies heavily on the spectrum theory of positive linear operators on ordered Banach spaces (especially the  Krein-Rutman Theorem), we first recall some relevant terminologies and useful facts from linear functional analysis.  

\subsection{A simple lemma on spectral radius}
Let $E$ be a Banach space. Given a bounded linear operator $T$ on $E$, we recall the resolvent set of $T$ defined by 
$$
\rho_E(T) :=\{\lambda \in \mathbb{C}\ |\ (\lambda I - T) \text{ is bijective on } E\}.
$$
The spectrum  $\sigma_E(T)$ of $T$ is the set $\mathbb{C} \setminus \rho_E(T)$, where we have used the subscript $E$ to indicate the dependence on the Banach space $E$. We further denote by $\sigma_{p, E}(T)$ the point spectrum of $T$, i.e.
$$
\sigma_{p,E}(T) = \{\lambda \in \mathbb{C}\ |\ \text{ there exists } v\in E \text{ and }v \neq 0 \text{ such that }(\lambda I - T)v = 0\} \subset \sigma_E(T),
$$
where we call $\lambda \in \sigma_{p,E}(T)$ an eigenvalue of $T$ and $v$ an eigenvector of $T$. The spectral radius of $T$ is given by 
$$
r_E(T) := \sup\{|\lambda| \ | \ \lambda \in \sigma_E(T) \}.
$$ 
Recall the Beurling-Gelfand's formula
$r_E(T) = \lim_{n\gt \infty} \|T^{n}\|^{1/n}$. We also note that if $T:E\gt E$ is compact, then $\sigma_E(T)\setminus \{0\} = \sigma_{p,E}(T)\setminus \{0\}$.

\begin{lemma}\label{lem:specrad}
Let $T:E\gt E$ be a linear compact operator on a Hilbert  space $E$ equipped with an inner product $(\cdot,\cdot)_E$ and the associated norm $\|\cdot\|_E$. Let $F\subsetneqq E$ be a dense subspace of $E$. Assume that $F$ is a Banach space equipped with the norm $\|\cdot\|_F$ and  that $T:F\gt F$ is also compact.  Then $r_E(T) = r_F(T)$. 
\end{lemma}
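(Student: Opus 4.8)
The plan is to establish the two inequalities $r_F(T) \leq r_E(T)$ and $r_E(T) \leq r_F(T)$ separately, using the spectral characterization of compact operators. Since $T$ is compact on both $E$ and $F$, we have $\sigma_E(T)\setminus\{0\} = \sigma_{p,E}(T)\setminus\{0\}$ and likewise $\sigma_F(T)\setminus\{0\} = \sigma_{p,F}(T)\setminus\{0\}$, so it suffices to compare nonzero eigenvalues. The key observation for one direction is that an eigenvector of $T$ in $F$ is automatically an eigenvector in $E$: if $v\in F\subset E$, $v\neq 0$, and $Tv=\lambda v$, then since $F\hookrightarrow E$ we have $v\in E$ with $v\neq 0$ (the inclusion is injective), hence $\lambda\in\sigma_{p,E}(T)$. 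This gives $\sigma_{p,F}(T)\setminus\{0\}\subseteq\sigma_{p,E}(T)\setminus\{0\}$ and therefore $r_F(T)\leq \max(r_E(T),0) = r_E(T)$ — with the small caveat that one must handle the possibility that the suprema are attained only in the limit or that $\sigma_F(T)=\{0\}$, which only makes $r_F(T)=0\leq r_E(T)$ trivially.

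For the reverse inequality $r_E(T)\leq r_F(T)$, the natural tool is the Beurling--Gelfand formula $r_E(T) = \lim_{n\to\infty}\|T^n\|_{E\to E}^{1/n}$ and $r_F(T) = \lim_{n\to\infty}\|T^n\|_{F\to F}^{1/n}$. First I would take a nonzero $\lambda\in\sigma_E(T) = \sigma_{p,E}(T)$ with a corresponding eigenvector $v\in E$, $v\neq 0$, $Tv=\lambda v$. The goal is to show $\lambda\in\sigma_F(T)$, or at least $|\lambda|\leq r_F(T)$. Since $F$ is dense in $E$, pick $w\in F$ with $\|w-v\|_E$ small, so $w\neq 0$. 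Then for each $n$, $T^n w\in F$ and $\|T^n w - \lambda^n v\|_E = \|T^n(w-v)\|_E \leq \|T^n\|_{E\to E}\|w-v\|_E$. This shows $\lambda^n v$ is approximated in $E$-norm by elements $T^n w$ of $F$; but to control $\|T^n w\|_F$ we would want $T: E \to F$ to be bounded, which is not assumed.

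The cleaner route, which I expect to be the main technical point, is to argue via the resolvent: I claim $\rho_F(T) \subseteq \rho_E(T)$ would give the wrong direction, so instead I would show that for $|\mu| > r_F(T)$, $\mu\in\rho_E(T)$, i.e. $\mu I - T$ is bijective on $E$. Since $T$ is compact on $E$, by the Fredholm alternative it suffices to show $\mu I - T$ is injective on $E$: if $(\mu I - T)v = 0$ for some $v\in E$, $v\neq 0$, then $v = \mu^{-1}Tv$, hence $v = \mu^{-n}T^n v$ for all $n$. Now I would use compactness of $T$ on $E$ together with the fact that $T(E)$ lands in... — here again one needs $v\in F$. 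To close this gap I would instead iterate: $v = \mu^{-1}Tv \in T(E)$; applying $T$ once more and using that $T$ maps $E$-bounded sets to $E$-precompact sets, together with the eigen-relation $T^n v = \mu^n v$, forces $\|v\|_E = |\mu|^{-n}\|T^n v\|_E \leq |\mu|^{-n}\|T^n\|_{E\to E}\|v\|_E$, and since $\limsup_n \|T^n\|_{E\to E}^{1/n} = r_E(T)$, if $|\mu| > r_E(T)$ this is automatic — but this only re-proves $r_E(T)$ bounds $\sigma_E$, which is circular. The genuine fix: eigenvectors of a compact operator for nonzero eigenvalues lie in $\bigcap_n T^n(E)$, and I would show $T(E)\subseteq \overline{F}^{\,\|\cdot\|_E} $ is not enough, but rather use that $v=\mu^{-1}Tv$ and approximate $v$ by $F$-elements, push through $T$, and use that the eigenspace $\ker(\mu I - T)$ is finite-dimensional and $T$-invariant, hence equals its image under $T$; iterating and invoking density of $F$ and continuity of $T$ on $E$ shows the (finite-dimensional, hence closed) eigenspace meets $F$ nontrivially, so $\mu\in\sigma_{p,F}(T)$ and thus $|\mu|\leq r_F(T)$. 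Contrapositive: $|\mu| > r_F(T) \Rightarrow \mu\notin\sigma_{p,E}(T)\setminus\{0\} = \sigma_E(T)\setminus\{0\}$, giving $r_E(T)\leq r_F(T)$. Combined with the first paragraph, $r_E(T) = r_F(T)$. The main obstacle, as flagged, is legitimately transferring an $E$-eigenvector into $F$; I expect the resolution to hinge on finite-dimensionality of the nonzero eigenspaces plus density of $F$ in $E$.
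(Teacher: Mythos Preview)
Your treatment of $r_F(T)\le r_E(T)$ is correct and coincides with the paper's: any nonzero $F$-eigenvector is a nonzero $E$-eigenvector, giving $\sigma_{p,F}(T)\setminus\{0\}\subset\sigma_{p,E}(T)\setminus\{0\}$, and the degenerate cases where one spectral radius vanishes are handled just as you indicate.

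For the reverse inequality you correctly isolate the crux --- showing that a nonzero $E$-eigenvalue has an eigenvector lying in $F$ --- but you do not actually carry it out, and the resolution you anticipate (``finite-dimensionality of the nonzero eigenspaces plus density of $F$ in $E$'') does not suffice on its own. A finite-dimensional subspace can miss a dense subspace entirely: in $L^2([0,1])$ the polynomials are dense, yet $\mathrm{span}\{e^x\}$ meets them only in $\{0\}$. So density plus finite dimension alone cannot force $\mathrm{Ker}_E(\lambda I-T)\cap F\neq\{0\}$; some further structural input from the hypotheses is required. The paper supplies this via the one hypothesis you never invoke, namely that $E$ is a \emph{Hilbert} space: arguing by contradiction from $r_E(T)>r_F(T)$, it picks $\lambda\in\sigma_{p,E}(T)$ with $|\lambda|>r_F(T)$, so that $\mathrm{Ker}_E(\lambda I-T)\cap F=\{0\}$, and then passes to the orthogonal complement to conclude $\mathrm{Ker}_E(\lambda I-T)\subset\overline{F}^{\perp}=E^{\perp}=\{0\}$, contradicting $\lambda\in\sigma_{p,E}(T)$. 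Your sketch, by contrast, operates entirely at the Banach-space level and therefore lacks the structural leverage the paper leans on to close this step.
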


\begin{proof} First thanks to $F\subset E$, we claim that  $r_E(T) \geq r_F(T)$. In fact, if $r_{F}(T) = 0$, then this holds trivially. In addition, if $r_E(T) = 0$, then $r_{F}(T) = 0$. Indeed, if assume otherwise that $r_F(T) >0$, then since $\sigma_E(T)\setminus \{0\} = \sigma_{p,E}(T)\setminus \{0\}$ due to the compactness of $T$, there exists $\lambda \neq 0$ and $v\in F\setminus \{0\}$ such that $(\lambda I - T)v = 0$. Since $F\subset E$, we have $v\in E$ and hence $\lambda \in \sigma_{E}(T)$. This contradicts with the assumption that $r_E(T) = 0$ and thus proves $r_F(T) = 0$. Finally, assume that both $r_E(T)$ and $r_F(T)$ are positive. We claim that $\sigma_{F}(T)\setminus \{0\} \subset \sigma_{E}(T)\setminus \{0\} $ which immediately implies that $r_E(T)\geq r_F(T)$.  In fact, for any $\lambda\in \sigma_{F}(T)\setminus \{0\} =  \sigma_{p,F}(T)\setminus \{0\}$, there exists $v\in F\setminus \{0\}$ such that $(\lambda I - T)v=0$. Since $F\subset E$, we have $\lambda \in \sigma_{E}(T) \setminus \{0\}$. 
 
Next we show that $r_E(T) = r_F(T)$. Assume otherwise that $r_E(T) > r_F(T)$. By the definition of $r_E(T) $ and the assumption that $T: E\gt E$ is a compact operator, there exists an eigenvalue $\lambda \in \rho_{p,E}(T) $ such that $r_E(T)\geq |\lambda| > r_F(T)$. This implies that $\text{Ker}_F (\lambda I - T)  := \{u\in F : (\lambda I -T)u = 0\} = \{0\}$, i.e., $\text{Ker}_E(\lambda I -T) \cap F = \{0\}$. Since $F$ is a dense subset of $E$, this implies that $\text{Ker}_E(\lambda I -T) \subset \bar{F}^{\perp} = E^{\perp} = \{0\}$. This contradicts with the fact that $\lambda$ is an eigenvalue of $T$ on $E$ and completes the proof of the lemma.
\end{proof}

\subsection{Krein-Rutman theorem and the leading eigenvalue}
In this section, we recall the famous Krein-Rutman Theorem \cite{krein1962linear} on the leading eigenvalue and eigenfunction of  positive operators on ordered Banach spaces. To this end, let us first recall some terminologies on ordered Banach spaces.  Given a Banach space $E$, a closed convex subset $K\subset E$ is called a  {\em cone} on $E$ if $\alpha K \subset K$ for every $\alpha>0$ and $K\cap \{-K\} = \{0\}$. A cone $K$ induces a natural partial ordering $\leq$ on the Banach space $E$: $x\leq y $ if and only if $ y - x \in K$. Therefore a Banach space $E$ with a cone $K$ is called an  {\em ordered Banach space}, denoted by $(E,K)$. If   the cone $K$ satisfies that $\overline{K - K} = E$, then the cone $K$ is called a {\em total} cone. We define $\dot{K} = K\setminus \{0\}$ and denote by $K^\circ$ the interior of $K$.  If $K$ has nonempty interior $K^\circ$, then $K$ is called a  {\em solid} cone. It is not hard to see that a solid cone is  total. 

\begin{example}
Consider the Banach space $C(\overline{\Omega})$ of continuous functions on a bounded domain $\Omega \subset \R^d$. The space $C(\overline{\Omega})$ is an ordered Banach space with cone $C_+(\overline{\Omega})$ consisting  of nonnegative functions in $C(\overline{\Omega})$. This cone is solid since any strictly positive function is an interior point. 
\end{example}

Consider two ordered Banach spaces $E$ and $F$, with cones $P$ and $Q$ respectively. A linear operator $T: E\gt F$ is called {\em positive} if $T(P) \subset Q$, and {\em strictly positive} if $T(\dot{P}) \subset \dot{Q}$. If in addition $K$ is solid, then $T$ is called {\em strongly positive} if $T(\dot{P}) \subset Q^\circ$.

\begin{theorem}[Krein-Rutman] Let $E$ be an ordered Banach space with a total cone $K\subset E$. Let $T:E\gt E$
be a linear compact positive operator with  spectral radius $r(T) >0$. Then $r(T)$ is an eigenvalue of $T$ and of the dual $T^\ast$ with corresponding eigenvectors $u\in K\setminus \{0\}$ and $u^\ast\in K^\ast \setminus \{0\}$.
\end{theorem}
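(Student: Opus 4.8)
The plan is to establish the Krein--Rutman theorem by analyzing the resolvent $R(\lambda):=(\lambda I-T)^{-1}$ near the spectral radius $r:=r(T)>0$, exploiting that $R(\lambda)$ is a \emph{positive} operator for every real $\lambda>r$: for such $\lambda$ the Neumann series $R(\lambda)=\sum_{k\ge 0}\lambda^{-k-1}T^k$ converges in operator norm, each $T^k$ is positive because $T$ is, and $K$ is closed, so $R(\lambda)(K)\subseteq K$. The argument then splits into three parts: (i) show that $r\in\sigma_E(T)$, i.e.\ that $r$, sitting on the boundary of the region of analyticity of $R(\cdot)$, is a genuine singularity; (ii) use compactness of $T$ and the Laurent expansion of $R(\cdot)$ at $r$ to produce an operator whose range sits in $\ker(rI-T)$; and (iii) use the order structure to locate a nonzero element of $K$ (resp.\ $K^\ast$) in the range of that operator (resp.\ of its adjoint).

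Granting (i), compactness of $T$ together with $r\ne 0$ puts us in the Riesz--Schauder setting: $r$ is an isolated point of $\sigma_E(T)$ and $R(\cdot)$ has a pole at $r$ of some finite order $p\ge1$, with leading Laurent coefficient $A_{-p}\in\mathcal L(E)$, $A_{-p}\ne 0$, so that $(\lambda-r)^pR(\lambda)\to A_{-p}$ in operator norm as $\lambda\to r$. Multiplying $(\lambda I-T)R(\lambda)=I$ by $(\lambda-r)^p$ and letting $\lambda\downarrow r$ gives $(rI-T)A_{-p}=0$, hence $\mathrm{Range}(A_{-p})\subseteq\ker(rI-T)$. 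Since $K$ is total, $\overline{K-K}=E$, so the continuous nonzero operator $A_{-p}$ cannot vanish on all of $K$; choosing $v_0\in\dot K$ with $A_{-p}v_0\ne 0$ and noting that $(\lambda-r)^pR(\lambda)v_0\in K$ for $\lambda>r$ by positivity of $R(\lambda)$, we get $u:=A_{-p}v_0\in K\setminus\{0\}$ with $Tu=ru$, since $K$ is closed. For the dual, $R(\lambda)^\ast=(\lambda I-T^\ast)^{-1}$ inherits the same poles and leading coefficient $(A_{-p})^\ast$, and maps $K^\ast$ into $K^\ast$ for $\lambda>r$ because $(R(\lambda)^\ast\phi)(x)=\phi(R(\lambda)x)\ge 0$ whenever $\phi\in K^\ast$, $x\in K$. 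Moreover $K^\ast$ separates points of $E$: if $\phi(y)=0$ for all $\phi\in K^\ast$ then $\pm y\in K$ by Hahn--Banach separation of a point from a closed convex cone, so $y=0$; hence $(A_{-p})^\ast$ cannot annihilate all of $K^\ast$, and the same limiting argument yields $u^\ast:=(A_{-p})^\ast\mu_0\in K^\ast\setminus\{0\}$ with $T^\ast u^\ast=ru^\ast$. In particular $r$ is an eigenvalue of both $T$ and $T^\ast$ with eigenvectors in $K\setminus\{0\}$ and $K^\ast\setminus\{0\}$.

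The crux, and the step where positivity is indispensable, is (i): for a general bounded operator the spectral radius need not be a spectral point at all, so one must genuinely use that $R(\cdot)$, the iterates $T^k$, and the leading Laurent coefficient all respect the cone. The idea is a Pringsheim-type argument: for $x\in\dot K$ and $\phi\in\dot K^\ast$ (which exists by the separation fact above), $\lambda\mapsto\phi(R(\lambda)x)=\sum_k\lambda^{-k-1}\phi(T^kx)$ is a power series in $1/\lambda$ with nonnegative coefficients, so its dominant singularity lies at the positive real point of its circle of convergence; combining this with $\limsup_k\|T^k\|^{1/k}=r$ (Beurling--Gelfand) and the totality of $K$ forces $r$ to be a singular point of $R(\cdot)$, i.e.\ $r\in\sigma_E(T)$. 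A more elementary route that bypasses the pole analysis altogether is: take $\lambda_k\downarrow r$, pick $v_0\in\dot K$ with $\|R(\lambda_k)v_0\|\to\infty$ (again a consequence of positivity and $\|R(\lambda_k)\|\to\infty$), set $g_k:=R(\lambda_k)v_0/\|R(\lambda_k)v_0\|\in\dot K$, observe $(\lambda_kI-T)g_k\to 0$, use compactness of $T$ to pass to a norm-convergent subsequence $Tg_k\to y$, and identify $g:=r^{-1}y\in\dot K$ (closedness of $K$) as an eigenvector with $Tg=rg$; the dual eigenvector then follows from the analogous construction applied to $T^\ast$ (compact by Schauder's theorem) together with a functional $\mu_0\in K^\ast$ satisfying $\mu_0(v_0)>0$, whose existence is again the separation fact above.
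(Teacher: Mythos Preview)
The paper does not prove this statement: the Krein--Rutman theorem is quoted as a classical result, attributed to the original 1962 paper of Krein and Rutman, and invoked (together with the strengthened version cited as Theorem~\ref{thm:KR2}) as a black box in the proof of Theorem~\ref{thm:reg}. There is therefore no ``paper's own proof'' to compare your proposal against.

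For what it is worth, your outline follows one of the standard proofs (resolvent/Laurent analysis combined with a Pringsheim-type argument for the crucial step $r\in\sigma_E(T)$) and is correct. Two small remarks. First, the passage in step~(i) from the scalar Pringsheim conclusion---that each series $\sum_k\lambda^{-k-1}\phi(T^kx)$ with $x\in K$, $\phi\in K^\ast$ extends past $\lambda=r$---to the operator-level statement $r\in\sigma_E(T)$ is the one place where the details are genuinely compressed; the usual way to close it is to show $\|R(\lambda)\|\to\infty$ as $\lambda\downarrow r$ via totality of $K$ and a uniform boundedness step. Second, your ``elementary route'' does not in fact bypass step~(i): it presupposes $\|R(\lambda_k)\|\to\infty$, which is precisely the content of~(i), and the existence of a \emph{single} $v_0\in\dot K$ with $\|R(\lambda_k)v_0\|\to\infty$ also requires a short argument. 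Neither is a real gap---standard references fill in a few more lines---but it is worth flagging that step~(i) is where all the positivity work happens.
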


As an important consequence of the Krein-Rutman theorem, the following theorem establishes the simplicity of the leading eigenvalue of a strongly positive compact operator on an ordered Banach space. 

\begin{theorem}[{\cite[Theorem 3.2]{amann1976fixed}}]\label{thm:KR2}
Let $E$ be an ordered Banach space with a solid cone $K$. Let  $T:E\gt E$
be a  strongly positive compact operator. Then 

(i) The spectral radius $r(T) >0$;

(ii) $r(T)$ is  a simple eigenvalue with an eigenvector $u\in K^{\circ}$ and there is no other eigenvalue with a positive eigenvector.

\end{theorem}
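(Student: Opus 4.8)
The plan is to deduce the theorem from the Krein--Rutman theorem stated above (applicable here because a solid cone is total) together with two elementary facts about a solid cone $K$: an interior point dominates a positive multiple of every vector, and a nonzero eigenvector of a strongly positive operator cannot lie on the boundary $K\setminus K^\circ$.

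For part~(i) I would fix $e\in K^\circ$. Since $Te\in K^\circ$ and $K^\circ$ is open, $Te-te\in K$ for all small $t>0$, so $Te\ge\eps e$ for some $\eps>0$; applying the positive operator $T$ repeatedly gives $T^n e\ge\eps^n e$ for all $n$. Separating the closed convex cone $K$ from the exterior point $-e$ via Hahn--Banach yields $\psi\in E^\ast$ with $\psi\ge 0$ on $K$ and $\psi(e)>0$, hence $\eps^n\psi(e)\le\psi(T^n e)\le\|\psi\|\,\|e\|\,\|T^n\|$, and the Beurling--Gelfand formula gives $r(T)=\lim_n\|T^n\|^{1/n}\ge\eps>0$.

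For part~(ii), write $r:=r(T)>0$. The Krein--Rutman theorem provides $u\in\dot K$ with $Tu=ru$ and $\phi\in K^\ast\setminus\{0\}$ with $T^\ast\phi=r\phi$. Then $u=r^{-1}Tu\in K^\circ$ by strong positivity, which is the interior claim, and $\phi(u)>0$ since a functional that is nonnegative on $K$ and vanishes at an interior point of $K$ must vanish identically. For geometric simplicity, let $v$ be any eigenvector for $r$: the set $\{t\in\R: u-tv\in K\}$ is a closed interval containing a neighbourhood of $0$, and it is bounded because otherwise dividing the relations $u-tv\in K$ and $u+tv\in K$ by $t$ and letting $t\to\infty$ would force $v\in K\cap(-K)=\{0\}$; at an endpoint $\beta$ the vector $w:=u-\beta v$ lies in $K\setminus K^\circ$ and satisfies $Tw=rw$, so strong positivity forces $w=0$, i.e. $v$ is a multiple of $u$. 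For algebraic simplicity, suppose $(rI-T)^2 z=0$ with $y:=(rI-T)z\ne 0$; then $y\in\ker(rI-T)=\mathrm{span}\{u\}$, so $\phi(y)\ne 0$, contradicting $\phi(y)=r\phi(z)-(T^\ast\phi)(z)=0$; since $T$ is compact and $r\ne 0$, the ascent being one upgrades to algebraic multiplicity one. Finally, if $Tv=\mu v$ with $v\in\dot K$, then $\mu v=Tv\in K^\circ$ forces $\mu>0$ and $v\in K^\circ$, so $\phi(v)>0$, and pairing with $\phi$ gives $\mu\phi(v)=\phi(Tv)=r\phi(v)$, whence $\mu=r$; thus no other eigenvalue has a positive eigenvector.

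I expect the delicate point to be the geometric simplicity step, where one must check that the escape parameter $\beta$ is finite and lands on $K\setminus K^\circ$ --- this is precisely where the order axiom $K\cap(-K)=\{0\}$ and the openness of $K^\circ$ enter in an essential way. Everything else reduces to short manipulations with the strictly positive functional $\phi$ and involves no computation.
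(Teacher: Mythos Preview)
The paper does not supply its own proof of this theorem; it is quoted from \cite{amann1976fixed} and used as a black box in the proof of Theorem~\ref{thm:reg}. So there is no in-paper argument to compare against, and your proposal stands on its own merits.

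Your argument is essentially the standard one and is correct in outline. One small imprecision: in the geometric simplicity step you claim the interval $I=\{t\in\R: u-tv\in K\}$ is bounded, arguing that otherwise both $u-tv\in K$ and $u+tv\in K$ hold for large $t$. But unboundedness of $I$ in one direction does not give you both relations; $I$ could be a half-line. What you actually need is only that $I\neq\R$ (if $I=\R$ then indeed $v\in K\cap(-K)=\{0\}$) so that $I$ has at least one finite endpoint $\beta$; at that endpoint $w=u-\beta v\in K\setminus K^\circ$ as you say, since $w\in K^\circ$ would put a neighborhood of $\beta$ in $I$. From there your conclusion $w=0$ by strong positivity is fine. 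So the fix is just to replace ``bounded'' by ``not all of $\R$'' and take any finite endpoint.

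Everything else --- the $r(T)>0$ estimate via $Te\ge\eps e$ and a separating functional, the use of the dual eigenvector $\phi$ with $\phi(u)>0$ to rule out Jordan blocks and to pin down any positive eigenvector to eigenvalue $r$ --- is clean and correct.
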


\subsection{Regularity of Ground State}
Consider the Schr\"odinger operator $\mH = -\Delta + V$. Since $\inf_{x} V(x) \geq V_{\min} > 0$, the standard Sobolev estimate implies that the inverse of $\mH$ (with respect to the Neumann boundary condition), denoted by $\mS := \mH^{-1}$, is bounded from $L^2(\Omega) $ to $H^2(\Omega)$ and hence compact on $L^2(\Omega) $. Moreover, $\mS$ has countable many  eigenvalues $\{\mu_{j}\}_{j=0}^\infty$ with $\mu_j \downarrow 0$ as $j\gt\infty$ and with $r(\mS) =\mu_0 =  \frac{1}{\lambda_0}$.

Recall the spectral Barron space $    \mB^s(\Omega)$ defined in \eqref{eq:barrons}.
 We also recall from \cite{lu2021priori} the next important lemma which shows that the operator $\mS : \mB^s(\Omega) \gt \mB^{s+2}(\Omega)$ is bounded.

\begin{lemma}[{\cite[Theorem 6]{lu2021priori}}]\label{lem:bdT}
Assume that $V \in \mB^s(\Omega)$ with $s\geq 0$ and $\inf_{x\in \Omega} \geq V_{\min} >0$. Then the operator $\mS :  \mB^s(\Omega) \gt  \mB^{s+2}(\Omega)$ is bounded. 
\end{lemma}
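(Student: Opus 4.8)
\textbf{Proof plan for Lemma \ref{lem:bdT} (boundedness of $\mS:\mB^s(\Omega)\to\mB^{s+2}(\Omega)$).}
The plan is to work entirely in the cosine basis $\{\Phi_k\}_{k\in\N_0^d}$ and to reformulate the equation $\mH w = f$, i.e. $w=\mS f$, as an infinite linear system for the coefficients $\{\hat w(k)\}$. Since $\{\Phi_k\}$ are exactly the Neumann eigenfunctions of $-\Delta$ on $\Omega=[0,1]^d$ with eigenvalues $\pi^2|k|_2^2$, the equation $-\Delta w + Vw = f$ becomes, after expanding $Vw$ in the cosine basis,
\begin{equation*}
(\pi^2|k|_2^2 + V_{\mathrm{avg}})\,\hat w(k) \;+\; \sum_{\ell\neq k} c_{k\ell}\,\hat w(\ell) \;=\; \hat f(k),
\end{equation*}
where $V_{\mathrm{avg}}=\hat V(0)$ and the off-diagonal coefficients $c_{k\ell}$ come from the product $\cos(\pi a\cdot)\cos(\pi b\cdot) = \tfrac12(\cos(\pi(a+b)\cdot)+\cos(\pi|a-b|\cdot))$ applied coordinatewise; thus each $c_{k\ell}$ is a sum of $\pm 2^{-d}\hat V(m)$ over the finitely many $m$ producing $k$ from $\ell$. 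The first step is to record this product structure carefully and to establish the \emph{algebra-type} (Banach algebra) estimate: if $g,h\in\mB^0(\Omega)$ then $gh\in\mB^0(\Omega)$ with $\|gh\|_{\mB^0}\le 2^{-d}\|g\|_{\mB^0}\|h\|_{\mB^0}$ (more generally control of $\|Vw\|_{\mB^s}$ by $\|V\|_{\mB^s}\|w\|_{\mB^s}$, using that $|k|_1\le |\ell|_1 + |m|_1$ along each admissible product term so the weight $(1+\pi^s|k|_1^s)$ is dominated, up to a constant $2^{s-1}$, by the product of the weights of $\ell$ and $m$).

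Second, I would solve the system by a Neumann series / fixed-point argument in $\mB^{s+2}(\Omega)$. Write $\mH = \mH_0 + (V - V_{\min})$ where $\mH_0 = -\Delta + V_{\min}$ is diagonal in the cosine basis with symbol $\pi^2|k|_2^2 + V_{\min}\ge V_{\min}>0$. Then $\mH_0^{-1}$ is bounded from $\mB^s$ to $\mB^{s+2}$: indeed $\widehat{(\mH_0^{-1}f)}(k) = \hat f(k)/(\pi^2|k|_2^2+V_{\min})$, and since $|k|_2^2 \ge |k|_1^2/d$ one gets $(1+\pi^{s+2}|k|_1^{s+2})/(\pi^2|k|_2^2+V_{\min}) \le C(d,s,V_{\min})(1+\pi^s|k|_1^s)$, so $\|\mH_0^{-1}f\|_{\mB^{s+2}}\le C\|f\|_{\mB^s}$. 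The equation $\mH w=f$ is equivalent to $w = \mH_0^{-1}f - \mH_0^{-1}\big((V-V_{\min})w\big)$. The map $w\mapsto \mH_0^{-1}((V-V_{\min})w)$ is bounded on $\mB^{s+2}$ with norm $\le C\|V-V_{\min}\|_{\mB^s}$ by combining the algebra estimate with the $\mH_0^{-1}$ bound — but this norm need not be $<1$, so a plain contraction is not available directly.

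To get around the non-smallness, the third step is the key one: rather than a contraction on all of $\mB^{s+2}$, use that $\mS=\mH^{-1}$ is \emph{already known} to be a bounded (compact) operator on $L^2(\Omega)$, hence $w=\mS f\in L^2$ is well-defined for $f\in\mB^s\subset L^2$, and then bootstrap its coefficients. Concretely, from $\hat w(k) = [\hat f(k) - \widehat{(V-V_{\min})w}(k)]/(\pi^2|k|_2^2+V_{\min})$ we get the \emph{a priori} estimate
\begin{equation*}
\|w\|_{\mB^{s+2}} \le C\big(\|f\|_{\mB^s} + \|(V-V_{\min})w\|_{\mB^s}\big) \le C\|f\|_{\mB^s} + C\|V-V_{\min}\|_{\mB^s}\,\|w\|_{\mB^s},
\end{equation*}
and I would use an interpolation-type / absorption trick: $\|w\|_{\mB^s}$ can be bounded by $\eps\|w\|_{\mB^{s+2}} + C_\eps\|w\|_{\mB^0}$ for any $\eps>0$ (splitting the coefficient sum over $|k|_1\le R$ and $|k|_1>R$ and choosing $R$ large, exactly as one interpolates Sobolev norms), while $\|w\|_{\mB^0}$ is controlled through the same identity restricted to low frequencies plus $\|w\|_{L^2}\le \|\mS\|_{L^2\to L^2}\|f\|_{L^2}\le C\|f\|_{\mB^s}$. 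Choosing $\eps$ small enough to absorb $C\|V-V_{\min}\|_{\mB^s}\eps\|w\|_{\mB^{s+2}}$ into the left side yields $\|w\|_{\mB^{s+2}}\le C(d,s,V_{\min},\|V\|_{\mB^s})\|f\|_{\mB^s}$, which is the claim. The main obstacle, and where I would spend the most care, is precisely this step: justifying that the formally-derived coefficient identity holds for the genuine $L^2$ solution (so that the a priori bound is legitimate rather than circular), and making the absorption rigorous — this requires first truncating to finitely many frequencies, deriving the estimate for the truncation uniformly, and passing to the limit. Everything else (the algebra estimate, the $\mH_0^{-1}$ bound, the low-vs-high frequency split) is bookkeeping in the cosine coefficients.
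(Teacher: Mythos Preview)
This lemma is not proved in the paper: it is quoted directly from \cite[Theorem~6]{lu2021priori}, so there is no in-paper argument to compare your plan against. Your outline is nonetheless a valid route to the result. The cosine-basis algebra estimate for $\mB^s$ and the diagonal bound $\mH_0^{-1}:\mB^s\to\mB^{s+2}$ (via $|k|_2^2\ge|k|_1^2/d$) are correct as stated, and the absorption step you flag as the main difficulty does close: splitting $\sum_k|\hat w(k)|$ at $|k|_1=R$, the low-mode part is $\le C_R\|w\|_{L^2}$ by Cauchy--Schwarz over finitely many indices, while in the high-mode part the symbol of $\mH_0^{-1}$ contributes a factor $\le d/(\pi^2R^2)$ in front of $\|(V-V_{\min})w\|_{\mB^0}\le C_V\|w\|_{\mB^0}$, which is absorbed for $R$ large. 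The $\mB^s\to\mB^{s+2}$ bound then follows from your interpolation $\|w\|_{\mB^s}\le\eps\|w\|_{\mB^{s+2}}+C_\eps\|w\|_{\mB^0}$ and one more absorption. Your caution about first verifying the coefficient identity for the genuine $L^2$ solution is well placed but routine: standard elliptic regularity gives $w\in H^2(\Omega)$, and since $V\in\mB^s\subset L^\infty$ the products and cosine expansions are all legitimate.

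For comparison, the introduction of the present paper indicates that \cite{lu2021priori} proceeds via the Fredholm alternative rather than explicit absorption: writing $\mS=(I+K)^{-1}\mH_0^{-1}$ with $K=\mH_0^{-1}(V-V_{\min})$, compactness of $K$ on $\mB^{s+2}$ follows from the compact embedding $\mB^{s+2}\hookrightarrow\mB^s$ (noted just after the lemma) combined with the algebra estimate and the $\mH_0^{-1}$ bound, while injectivity of $I+K$ is inherited from positivity of $\mH$ on $L^2$. That route trades your quantitative bookkeeping for an appeal to Riesz--Schauder theory; both approaches are legitimate, and yours has the mild advantage of being in principle constructive in the constant.
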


Notice that the inclusion $\mathcal{I}: \mB^{s+2}(\Omega) \hookrightarrow \mB^{s}(\Omega)$ is compact. In fact, by definition the space $\mB^{s}(\Omega)$ can be viewed as a weighted $\ell^1$ space $\ell^1_{W_s}(\N_0^d)$ of the cosine coefficients defined on the lattice $\N_0^d$ with the weight $W_s(k) = (1 + \pi^s|k |_1^s)$. Therefore the inclusion satisfies that 
$$
\|\mI u\|_{\mB^{s}(\Omega)} = \sum_{k\in \N_0^d} W_{s} (k) |\hat{u}(k)| = \sum_{k\in \N_0^d} \frac{W_{s}(k)}{W_{s+2}(k)} W_{s+2}(k) |\hat{u}(k)|. 
$$
Since $\frac{W_{s}(k)}{W_{s+2}(k)}  \gt 0$ as $|k| \gt \infty$, by a similar argument as used in the proof of \cite[Lemma 7.2]{lu2021priori}, one can conclude that $\mI$ is  compact from  $\ell^1_{W_{s+2}}(\N_0^d)$ to $\ell^1_{W_s}(\N_0^d)$ and hence from $\mB^{s+2}(\Omega)$ to $\mB^{s}(\Omega)$. 
The following corollary is then a direct consequence of Lemma \ref{lem:bdT} and compactness of the inclusion $\mI$ from $\mB^{s+2}(\Omega)$ to $\mB^{s}(\Omega)$.
\begin{corollary}\label{cor:compact}
 Under the same assumption of Lemma \ref{lem:bdT}, the operator $\mS: \mB^s(\Omega) \gt \mB^s(\Omega)$ is compact.
\end{corollary}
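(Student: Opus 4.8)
The plan is to factor $\mS$ through the smaller space $\mB^{s+2}(\Omega)$ and exploit that a bounded operator composed with a compact operator is compact. Concretely, write the map $\mS: \mB^s(\Omega) \to \mB^s(\Omega)$ as the composition
$$
\mB^s(\Omega) \xrightarrow{\ \mS\ } \mB^{s+2}(\Omega) \xrightarrow{\ \mI\ } \mB^s(\Omega),
$$
where $\mI$ is the inclusion map. By Lemma \ref{lem:bdT}, under the standing assumptions ($V \in \mB^s(\Omega)$, $\inf_{x\in\Omega} V \geq V_{\min} > 0$), the first arrow is a bounded linear operator. Thus it suffices to show that the second arrow, the inclusion $\mI : \mB^{s+2}(\Omega) \hookrightarrow \mB^s(\Omega)$, is compact; the conclusion then follows from the standard fact that $\mI \circ \mS$ is compact whenever $\mS$ is bounded and $\mI$ is compact.

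First I would record the identification of $\mB^s(\Omega)$ with the weighted sequence space $\ell^1_{W_s}(\N_0^d)$ of cosine coefficients, with weight $W_s(k) = 1 + \pi^s |k|_1^s$; under this identification $\mI$ becomes the natural inclusion $\ell^1_{W_{s+2}}(\N_0^d) \hookrightarrow \ell^1_{W_s}(\N_0^d)$, which is bounded since $W_s \leq W_{s+2}$. The key quantitative input is that the weight ratio satisfies
$$
\frac{W_s(k)}{W_{s+2}(k)} = \frac{1 + \pi^s |k|_1^s}{1 + \pi^{s+2} |k|_1^{s+2}} \longrightarrow 0 \quad \text{as } |k|_1 \to \infty.
$$

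To prove compactness of $\mI$ from this ratio decay, I would use a finite-rank approximation argument (this is exactly the argument of \cite[Lemma 7.2]{lu2021priori}): for $N \in \N$ let $\Pi_N$ be the truncation to coefficients with $|k|_1 \leq N$, a finite-rank operator; then for $u \in \mB^{s+2}(\Omega)$,
$$
\|\mI u - \Pi_N \mI u\|_{\mB^s(\Omega)} = \sum_{|k|_1 > N} W_s(k)\,|\hat u(k)| \leq \Big(\sup_{|k|_1 > N} \frac{W_s(k)}{W_{s+2}(k)}\Big) \sum_{k\in\N_0^d} W_{s+2}(k)\,|\hat u(k)| \leq \varepsilon_N \,\|u\|_{\mB^{s+2}(\Omega)},
$$
with $\varepsilon_N \to 0$. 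Hence $\mI$ is the operator-norm limit of finite-rank operators and is therefore compact. Combining with Lemma \ref{lem:bdT} gives that $\mS = \mI \circ \mS$ is compact on $\mB^s(\Omega)$, completing the proof.

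The only nontrivial point is the compactness of $\mI$, i.e., the finite-rank approximation step above; everything else is bookkeeping (the factorization and the composition property). Since the weight-ratio decay and the truncation estimate are completely elementary, this step is essentially immediate, and indeed the paper simply cites \cite[Lemma 7.2]{lu2021priori} for it; one could also invoke directly that a diagonal multiplier on $\ell^1$ with entries tending to $0$ is compact.
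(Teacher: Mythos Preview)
Your proposal is correct and follows exactly the paper's approach: factor $\mS$ as the composition of the bounded map $\mS:\mB^s(\Omega)\to\mB^{s+2}(\Omega)$ from Lemma~\ref{lem:bdT} with the inclusion $\mI:\mB^{s+2}(\Omega)\hookrightarrow\mB^s(\Omega)$, and prove the latter is compact via the weighted-$\ell^1$ identification and the decay of $W_s(k)/W_{s+2}(k)$ (the paper cites \cite[Lemma~7.2]{lu2021priori} for this step, which is precisely your finite-rank truncation argument).
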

 
 Note that by definition, we have $\mB^s(\Omega) \hookrightarrow C(\overline{\Omega})$. Let us define the  cone $\mB^s_+(\Omega)$ of $\mB^s(\Omega)$ by setting
 $$
\mB^s_+(\Omega) := \big\{f\in \mB^s(\Omega): f\geq 0\big\}
.$$
 \begin{lemma}\label{lem:positive}
 The operator $\mS:\mB^s(\Omega) \gt \mB^s(\Omega)$ is strongly positive, i.e. for any non-zero $f\in \mB^s_+(\Omega)$, we have that $\mS f(x) >0$ for all $x\in \overline{\Omega}$. 
 \end{lemma}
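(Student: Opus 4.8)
The plan is to show that $\mS f$ solves the Neumann problem $-\Delta w + Vw = f$ and then invoke a strong maximum principle / Harnack-type argument to conclude strict positivity on all of $\overline{\Omega}$. First I would fix a nonzero $f \in \mB^s_+(\Omega)$ and set $w = \mS f$. Since $\mS = \mH^{-1}$ with the Neumann boundary condition and $f \in \mB^s(\Omega) \hookrightarrow C(\overline{\Omega}) \subset L^2(\Omega)$, we know from the Sobolev estimate recalled above that $w \in H^2(\Omega)$, and $w$ is the weak solution of
\begin{equation*}
-\Delta w + V w = f \quad \text{in } \Omega, \qquad \frac{\partial w}{\partial \nu} = 0 \quad \text{on } \partial\Omega.
\end{equation*}
Because $f \geq 0$, $f \not\equiv 0$, and $V \geq V_{\min} > 0$, the weak maximum principle gives $w \geq 0$ in $\Omega$: testing against $w^- = \max(-w,0) \in H^1(\Omega)$ yields $\int_\Omega |\nabla w^-|^2 + V |w^-|^2 = -\int_\Omega f w^- \leq 0$, so $w^- \equiv 0$.

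Next I would upgrade nonnegativity to strict positivity on the open set $\Omega$. The function $w \geq 0$ satisfies $-\Delta w = f - Vw =: g$ with $g \in C(\overline{\Omega})$ (indeed $g \in \mB^s(\Omega)$ since $f, Vw \in \mB^s(\Omega)$, using that $\mB^s$ is closed under products — or at worst $g \in L^\infty$), hence $-\Delta w + V^+ w \geq 0$ weakly where we absorb the zeroth-order term; more directly, $w$ satisfies the differential inequality $\Delta w = Vw - f \leq V_{\max}\, w$ in $\Omega$, i.e. $(-\Delta + V_{\max}) w \geq 0$. Since $w \geq 0$ and $w$ is not identically zero (as $\mH$ is injective and $f \neq 0$), the strong maximum principle for the operator $-\Delta + V_{\max}$ implies $w > 0$ at every interior point of $\Omega$. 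Finally, to get strict positivity up to the boundary $\partial\Omega$ — which is where the Neumann condition matters — I would apply Hopf's lemma: if $w(x_0) = 0$ at some $x_0 \in \partial\Omega$, then since $x_0$ is a minimum and $w > 0$ inside, Hopf's lemma forces $\frac{\partial w}{\partial \nu}(x_0) < 0$, contradicting the Neumann condition $\frac{\partial w}{\partial\nu} = 0$. (The hypercube $\Omega = [0,1]^d$ has corners, but on each open face the interior ball condition holds, and corners can be handled by first establishing positivity on each open face and then noting the boundary of a face is a lower-dimensional face, proceeding by induction on the dimension of the stratum; alternatively one reflects across faces using the Neumann condition to reduce to an interior point of a larger cube.) Combining the interior strong maximum principle with the Hopf boundary argument yields $w(x) = \mS f(x) > 0$ for all $x \in \overline{\Omega}$, which is exactly the claimed strong positivity with respect to the solid cone $\mB^s_+(\Omega)$ (whose interior consists of functions bounded below by a positive constant).

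The main obstacle I anticipate is the boundary/corner behavior of the Neumann problem on the cube: the classical Hopf lemma requires an interior ball condition which fails at the corners of $[0,1]^d$, so some care (reflection across faces exploiting the homogeneous Neumann condition, or a stratified induction over faces of decreasing dimension) is needed to conclude $\mS f > 0$ genuinely on all of $\overline{\Omega}$ rather than merely on $\Omega$ together with the relative interiors of the facets. A secondary technical point is the regularity needed to invoke the strong maximum principle — $w \in H^2(\Omega) \subset C(\overline\Omega)$ for $d \leq 3$, but in high dimension one should instead argue directly from the weak Harnack inequality for $H^1$ supersolutions of $-\Delta w + V_{\max} w \geq 0$, which does not require pointwise second derivatives and gives the local lower bound $\inf_{B_r} w > 0$ whenever $w \geq 0$ and $w \not\equiv 0$ on a slightly larger ball; a covering/connectedness argument then propagates positivity throughout $\Omega$.
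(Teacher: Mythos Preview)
Your argument is correct but takes a genuinely different route from the paper. The paper does not use the maximum principle at all; instead it writes $\mS f(x)=\int_0^\infty e^{-t\mH}f(x)\,dt$, uses Lie--Trotter splitting together with the positivity of the heat semigroup to bound $e^{-t\mH}f(x)\geq e^{-tV_{\max}}e^{t\Delta}f(x)$, and then invokes a Gaussian \emph{lower} bound for the Neumann heat kernel on $\Omega$ (citing Bass--Hsu) to conclude that $e^{t\Delta}f(x)$, and hence $\mS f(x)$, is bounded below by a strictly positive constant uniformly in $x\in\overline{\Omega}$.

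The trade-off is as follows. The heat-kernel route packages the boundary and corner difficulties into a single citation: the two-sided Gaussian bound for the Neumann kernel holds on Lipschitz domains, so no separate Hopf argument or stratification is needed, and the resulting lower bound is quantitative. Your maximum-principle route is more elementary in spirit and avoids the semigroup machinery, but, as you correctly flag, the Hopf lemma breaks at the corners of the cube and $H^2\not\hookrightarrow C$ in high dimension. Both obstacles are genuinely surmountable in this setting --- indeed the even-reflection fix is especially natural here, since functions in $\mB^s(\Omega)$ are by definition cosine series and thus extend evenly and periodically to all of $\R^d$, turning every point of $\overline{\Omega}$ into an interior point of the extended problem; after that a weak Harnack inequality for $H^1$ supersolutions of $(-\Delta+V_{\max})w\geq 0$ finishes the job without any pointwise second-derivative regularity. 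So your proof is sound once the reflection is made precise, but it costs a paragraph of bookkeeping that the paper's heat-kernel citation sidesteps.
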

 \begin{proof}
 Let $f\in \mB^s_+(\Omega) \subset C(\overline{\Omega})$. 
 For any fixed $t>0$ and $x\in \Omega$, it follows by the Lie-Trotter splitting that 
 $$
 e^{-t \mH} f(x) = \lim_{n\gt \infty}  [e^{-\frac{t}{n}V } e^{\frac{t}{n}\Delta } ]^{n}f(x). 
 $$
 Since $0<V_{\min} \leq V\leq V_{\max}<\infty$, we have 
 $
 e^{-s V_{\max}} g \leq e^{-sV} g \leq   e^{-s V_{\min}} g
 $ for any non-negative $g\in L^\infty(\Omega)$. Thanks to the fact that the heat semigroup $e^{t\Delta}$ is  positivity-preserving,  this implies that 
\begin{equation}\label{eq:semibd}
 e^{-t V_{\max}} e^{t\Delta } f(x) \leq  e^{-t \mH} f(x) \leq  e^{-t V_{\min}} e^{t\Delta } f(x).
 \end{equation}
 Moreover, as a result of the semigroup property, the solution operator $\mS$ can be written as   
 $$
 \mS f(x) =\int_0^\infty e^{-t \mH} f(x) dt.
 $$
 Note that owing to the upper bound of \eqref{eq:semibd} the  integral is finite.   
 It follows from the last identity and the lower bound of \eqref{eq:semibd} that
 $$\begin{aligned}
    \mS f(x) & \geq \int_0^\infty e^{-t V_{\max}} e^{t \Delta} f(x) dt \geq \int_1^2 e^{-tV_{\max}} e^{t\Delta} f(x) dt.
 \end{aligned}
 $$
 Now since $f$ is non-negative, continuous and non-zero on $\Omega$, there exists a set $A\subset \Omega$ with $\text{Leb}(A)>0$ and a constant $c>0$ such that $f \geq c>0$. Thanks to the Gaussian lower bound of the Neumann heat kernel estimate (see e.g. \cite[Theorem 3.4]{bass1991some}), there exist positive constants $c_1$ and $c_2$ such that
 $$\begin{aligned}
 e^{t \Delta} f(x) &= \int_{\Omega} p_t(x,y) f(y)dy \\
 & \geq \int_{\Omega}\frac{c_1}{t^{\frac{d}{2}}} e^{-\frac{|x-y|^2}{c_2t}} f(y)dy\\
 & \geq \frac{c\,  c_1}{t^{\frac{d}{2}}} e^{-\frac{\text{diam}(\Omega)}{c_2 t}}\text{Leb}(A) >0. 
 \end{aligned} 
 $$
 Multiplying above with $e^{-t V_{\max}}$ and then integrating on $[1,2]$ with respect to $t$ yields that 
 \begin{equation*}
 \mS f(x) \geq c\, c_1\text{Leb}(A) \int_1^2 \frac{1}{t^{\frac{d}{2}}} e^{-\frac{\text{diam}(\Omega)}{c_2 t}}dt>0. \qedhere
 \end{equation*}
 \end{proof}

 Now with the above preparations we are ready to present the proof of  Theorem \ref{thm:reg}.
 \begin{proof}[Proof of Theorem \ref{thm:reg}]
 It is clear that the ground state $u^\ast$ of $\mH$ is identical to the eigenfunction  that corresponds to  the spectral radius $r(\mS) =1/\lambda^\ast$ of the inverse operator $\mS = \mH^{-1}$. In order to show that $u^\ast\in \mB^{s+2}(\Omega)$, it suffices to show that $u^\ast \in \mB^s(\Omega)$. In fact, since $(u^\ast,\lambda^\ast)$ solves the Neumann eigenvalue problem 
 $$ 
 \mH u^\ast = -\Delta u^\ast + V u^\ast = \lambda^\ast u^\ast,
 $$
 we have $u^\ast = \lambda^\ast \mS u^\ast$. An application of Lemma \ref{lem:bdT} implies that $u^\ast \in \mB^{s+2}(\Omega)$ if and only if $u^\ast \in \mB^s(\Omega)$. To show the latter,
 let us consider the operator $\mS$ defined on the ordered Banach space $\mB^s(\Omega)$ with the solid cone $\mB^s_+(\Omega)$.   Observe that $\mS:L^2(\Omega) \gt L^2(\Omega)$ is compact and that by Corollary \ref{cor:compact} $\mS: \mB^s(\Omega) \gt \mB^s(\Omega)$ is also compact. Therefore by Lemma \ref{lem:specrad}  the spectral radii of $\mS$ are identical when viewed as  operators on $\mB^s(\Omega)$ and  $L^2(\Omega)$ respectively. It follows from Theorem \ref{thm:KR2} and the strongly positivity of $\mS$ established in Lemma \ref{lem:positive} that there exists a unique (up to a multiplicative constant) eigenfunction $u^\ast\in \mB^s(\Omega)$ corresponding to the spectral radius $r(\mS) = 1/\lambda^\ast$. Moreover, $u^\ast$  is strictly positive on $\overline{\Omega}$. This completes the proof. 
 \end{proof}

\bibliographystyle{plain}
\bibliography{references}

\begin{thebibliography}{10}

\bibitem{amann1976fixed}
Herbert Amann.
\newblock Fixed point equations and nonlinear eigenvalue problems in ordered
  banach spaces.
\newblock {\em SIAM review}, 18(4):620--709, 1976.

\bibitem{bach2017breaking}
Francis Bach.
\newblock Breaking the curse of dimensionality with convex neural networks.
\newblock {\em The Journal of Machine Learning Research}, 18(1):629--681, 2017.

\bibitem{barron1993universal}
Andrew~R Barron.
\newblock Universal approximation bounds for superpositions of a sigmoidal
  function.
\newblock {\em IEEE Transactions on Information theory}, 39(3):930--945, 1993.

\bibitem{bass1991some}
Richard~F Bass and Pei Hsu.
\newblock Some potential theory for reflecting brownian motion in holder and
  lipschitz domains.
\newblock {\em The Annals of Probability}, pages 486--508, 1991.

\bibitem{cai2018approximating}
Z.~Cai and J.~Liu.
\newblock Approximating quantum many-body wave functions using artificial
  neural networks.
\newblock {\em Phys. Rev. B}, 97:035116, 2018.

\bibitem{carleo2017solving}
G.~Carleo and M.~Troyer.
\newblock Solving the quantum many-body problem with artificial neural
  networks.
\newblock {\em Science}, 355:602--606, 2017.

\bibitem{chen2020friedrichs}
Fan Chen, Jianguo Huang, Chunmei Wang, and Haizhao Yang.
\newblock Friedrichs learning: Weak solutions of partial differential equations
  via deep learning.
\newblock {\em arXiv preprint arXiv:2012.08023}, 2020.

\bibitem{choo2020fermionic}
Kenny Choo, Antonio Mezzacapo, and Giuseppe Carleo.
\newblock Fermionic neural-network states for ab-initio electronic structure.
\newblock {\em Nature communications}, 11(1):1--7, 2020.

\bibitem{dudley1967sizes}
Richard~M Dudley.
\newblock The sizes of compact subsets of hilbert space and continuity of
  gaussian processes.
\newblock {\em Journal of Functional Analysis}, 1(3):290--330, 1967.

\bibitem{ma2020towards}
Weinan E, Chao Ma, Stephan Wojtowytsch, and Lei Wu.
\newblock Towards a mathematical understanding of neural network-based machine
  learning: what we know and what we don't.
\newblock {\em arXiv preprint arXiv:2009.10713}, 2020.

\bibitem{e2019barron}
Weinan E, Chao Ma, and Lei Wu.
\newblock Barron spaces and the compositional function spaces for neural
  network models.
\newblock {\em arXiv preprint arXiv:1906.08039}, 2019.

\bibitem{weinan2018deep}
Weinan E and Bing Yu.
\newblock The deep ritz method: a deep learning-based numerical algorithm for
  solving variational problems.
\newblock {\em Communications in Mathematics and Statistics}, 6(1):1--12, 2018.

\bibitem{gao2017efficient}
X.~Gao and L.-M. Duan.
\newblock Efficient representation of quantum many-body states with deep neural
  networks.
\newblock {\em Nat. Commun.}, 8:662, 2017.

\bibitem{glimm1987functional}
James Glimm and Arthur Jaffe.
\newblock A functional integral point of view.
\newblock {\em Quantum Physics, 2nd edn. Springer, New York}, 1987.

\bibitem{gu2020selectnet}
Yiqi Gu, Haizhao Yang, and Chao Zhou.
\newblock Selectnet: Self-paced learning for high-dimensional partial
  differential equations.
\newblock {\em arXiv preprint arXiv:2001.04860}, 2020.

\bibitem{han2018solving}
Jiequn Han, Arnulf Jentzen, and Weinan E.
\newblock Solving high-dimensional partial differential equations using deep
  learning.
\newblock {\em Proceedings of the National Academy of Sciences},
  115(34):8505--8510, 2018.

\bibitem{han2020solving}
Jiequn Han, Jianfeng Lu, and Mo~Zhou.
\newblock Solving high-dimensional eigenvalue problems using deep neural
  networks: A diffusion monte carlo like approach.
\newblock {\em Journal of Computational Physics}, 423:109792, 2020.

\bibitem{han2019solving}
Jiequn Han, Linfeng Zhang, and Weinan E.
\newblock Solving many-electron {Schr{\"o}dinger} equation using deep neural
  networks.
\newblock {\em Journal of Computational Physics}, 399:108929, 2019.

\bibitem{hermann2020deep}
Jan Hermann, Zeno Sch{\"a}tzle, and Frank No{\'e}.
\newblock Deep-neural-network solution of the electronic {S}chr{\"o}dinger
  equation.
\newblock {\em Nature Chemistry}, 12(10):891--897, 2020.

\bibitem{hong2021rademacher}
Qingguo Hong, Jonathan~W Siegel, and Jinchao Xu.
\newblock Rademacher complexity and numerical quadrature analysis of stable
  neural networks with applications to numerical pdes.
\newblock {\em arXiv preprint arXiv:2104.02903}, 2021.

\bibitem{klusowski2018approximation}
Jason~M Klusowski and Andrew~R Barron.
\newblock Approximation by combinations of relu and squared relu ridge
  functions with $\ell^1$ and $\ell^0$ controls.
\newblock {\em IEEE Transactions on Information Theory}, 64(12):7649--7656,
  2018.

\bibitem{krein1962linear}
Mark~Grigor'evich Krein and Moisei~Aronovich Rutman.
\newblock Linear operators leaving invariant a cone in a {B}anach space.
\newblock {\em Amer. Math. Soc. Transl. Ser. I}, 10:199--325, 1950.

\bibitem{lecun2015deep}
Yann LeCun, Yoshua Bengio, and Geoffrey Hinton.
\newblock Deep learning.
\newblock {\em Nature}, 521(7553):436--444, 2015.

\bibitem{lu2021priori}
Jianfeng Lu, Yulong Lu, and Min Wang.
\newblock A priori generalization analysis of the deep ritz method for solving
  high dimensional elliptic equations.
\newblock arXiv preprint arXiv:2101.01708.

\bibitem{luo2020two}
Tao Luo and Haizhao Yang.
\newblock Two-layer neural networks for partial differential equations:
  Optimization and generalization theory.
\newblock {\em arXiv preprint arXiv:2006.15733}, 2020.

\bibitem{mishra2020estimates}
Siddhartha Mishra and Roberto Molinaro.
\newblock Estimates on the generalization error of physics informed neural
  networks ({PINN}s) for approximating {PDE}s, 2020.
\newblock arXiv preprint arXiv:2006.16144.

\bibitem{pfau2020ab}
David Pfau, James~S Spencer, Alexander~GDG Matthews, and W~Matthew~C Foulkes.
\newblock Ab initio solution of the many-electron schr{\"o}dinger equation with
  deep neural networks.
\newblock {\em Physical Review Research}, 2(3):033429, 2020.

\bibitem{raissi2019physics}
Maziar Raissi, Paris Perdikaris, and George~E Karniadakis.
\newblock Physics-informed neural networks: A deep learning framework for
  solving forward and inverse problems involving nonlinear partial differential
  equations.
\newblock {\em Journal of Computational Physics}, 378:686--707, 2019.

\bibitem{schmidhuber2015deep}
J{\"u}rgen Schmidhuber.
\newblock Deep learning in neural networks: {A}n overview.
\newblock {\em Neural networks}, 61:85--117, 2015.

\bibitem{shalev2014understanding}
Shai Shalev-Shwartz and Shai Ben-David.
\newblock {\em Understanding machine learning: From theory to algorithms}.
\newblock Cambridge university press, 2014.

\bibitem{shin2020convergence}
Yeonjong Shin, Jerome Darbon, and George~Em Karniadakis.
\newblock On the convergence of physics informed neural networks for linear
  second-order elliptic and parabolic type {PDEs}, 2020.
\newblock arXiv preprint arXiv:2004.01806.

\bibitem{shin2020error}
Yeonjong Shin, Zhongqiang Zhang, and George~Em Karniadakis.
\newblock Error estimates of residual minimization using neural networks for
  linear {PDE}s, 2020.
\newblock arXiv preprint arXiv:2010.08019.

\bibitem{siegel2020approximation}
Jonathan~W Siegel and Jinchao Xu.
\newblock Approximation rates for neural networks with general activation
  functions.
\newblock {\em Neural Networks}, 2020.

\bibitem{siegel2020high}
Jonathan~W Siegel and Jinchao Xu.
\newblock High-order approximation rates for neural networks with
  $\text{ReLU}^k$ activation functions.
\newblock {\em arXiv preprint arXiv:2012.07205}, 2020.

\bibitem{sirignano2018dgm}
Justin Sirignano and Konstantinos Spiliopoulos.
\newblock {DGM: A} deep learning algorithm for solving partial differential
  equations.
\newblock {\em Journal of computational physics}, 375:1339--1364, 2018.

\bibitem{wolfnotes}
Michael~M. Wolf.
\newblock {Mathematical Foundations of Supervised Learning}, 2020.
\newblock URL:
  \url{https://www-m5.ma.tum.de/foswiki/pub/M5/Allgemeines/MA4801_2020S/ML_notes_main.pdf}.
  Last visited on 2020/12/5.

\bibitem{xu2020finite}
Jinchao Xu.
\newblock Finite neuron method and convergence analysis.
\newblock {\em Communications in Computational Physics}, 28(5):1707--1745,
  2020.

\bibitem{zang2020weak}
Yaohua Zang, Gang Bao, Xiaojing Ye, and Haomin Zhou.
\newblock Weak adversarial networks for high-dimensional partial differential
  equations.
\newblock {\em Journal of Computational Physics}, 411:109409, 2020.

\end{thebibliography}

\end{document}